\newcommand{\rE}{\mathbb{E}}
\newcommand{\rP}{\mathbb{P}}
\newcommand{\re}{\mathrm{e}}
\newcommand{\ep}{\epsilon}
\newcommand{\ri}{\mathrm{i}}
\newcommand{\rd}{\mathrm{d}}
\newcommand{\bb}{\begin{eqnarray*}}
\newcommand{\ee}{\end{eqnarray*}}
\newcommand{\bbb}{\begin{eqnarray}}
\newcommand{\eee}{\end{eqnarray}}
\DeclareMathOperator{\var}{\mathrm{var}}
\theoremstyle{plain}
\newtheorem{theorem}{Theorem}
\newtheorem{lemma}[theorem]{Lemma}
\newtheorem{proposition}[theorem]{Proposition}
\newtheorem{corollary}[theorem]{Corollary}
\theoremstyle{definition}
\newtheorem{example}[theorem]{Example}
\begin{document}
\title[Self-intersection local times]{Optimal bounds for self-intersection local times}
\author{George Deligiannidis}
\address{Department of Statistics, University of Oxford, Oxford OX1 3TG, UK}
\email{deligian@stats.ox.ac.uk}
\author{Sergey Utev}
\address{Department of Mathematics, University of Leicester, LE1 7RH,UK}
\email{su35@le.ac.uk}
\subjclass[2000]{Primary 60G50, 60F05.}
\begin{abstract}
For a random walk $S_n, n\geq 0$ in $\mathbb{Z}^d$, let $l(n,x)$ be its local time at the site
$x\in \mathbb{Z}^d$. Define the $\alpha$-fold self intersection local time $L_n(\alpha) := \sum_{x} l(n,x)^{\alpha}$, and let $L_n(\alpha|\ep, d)$ the corresponding  quantity for  $d$-dimensional simple random walk.
Without imposing any moment conditions, we show that the 
variances of the local times $\var(L_n(\alpha))$ of any genuinely $d$-dimensional random walk are bounded above by 
the corresponding characteristics of the simple symmetric random walk in $\mathbb{Z}^d$, i.e. $\var(L_n(\alpha)) \leq C \var[L_n(\alpha|\ep, d)]\sim K_{d,\alpha}v_{d,\alpha}(n)$. 
In particular, variances of local times of all genuinely $d$-dimensional random walks, $d\geq 4$, are similar 
to the $4$-dimensional symmetric case $\var(L_n(\alpha)) = O(n)$.  On the other hand, in dimensions $d\leq 3$ the resemblance to the simple 
random walk $\liminf_{n\to \infty} \var(L_n(\alpha))/v_{d,\alpha}(n)>0$ implies that the jumps must have zero mean and finite second moment.
\end{abstract}
\keywords{Self-intersection local time, random walk in random scenery.}

\maketitle

\section{Introduction and main results}
Let $X, X_1, X_2, \dots$ be independent,  identically distributed, $\mathbb{Z}^d$-valued random variables, 
and define the random walk $S_0:=0$, $S_n = \sum_{j=1}^n X_j$, for $n\geq 1$
Let $l(n,x)=\sum_{j=1}^n\mathbf{I}(S_j=x)$ be the local time of $(S_n)_n$ at the site $x \in \mathbb{Z}^d$,
and define for a positive integer $\alpha$ 
the $\alpha$-fold {\sl self-intersection local time}
$$L_n=L_n(\alpha) = \sum_{x\in\mathbb{Z}^d} l(n,x)^{\alpha} =
\sum_{i_1, \dots, i_\alpha = 0}^n \mathbf{I}(S_{i_1} = \cdots = S_{i_\alpha} ).$$
Our method also applies to the more general case where the $X_i$ are independent but not identically distributed. 
To distinguish between the two cases, we shall refer to random walk with independent identically distributed increments as the i.i.d.\ case.
Following Spitzer~\cite{Spitzer76}, in the i.i.d.\ case, we call $X_i$ and the random walk it generates 
{\sl genuinely $d$-dimensional} if the support of the variable $X_1-X_2$
linearly generates $d$-dimensional space.  Finally let  $\Gamma=[0,2\pi]^d$.

The quantity $L_n(\alpha)$ has received considerable attention in the literature due to its
relation to {\sl self-avoiding walks}  and {\sl random walks in random scenery}. In particular let the {\sl random scenery}
$\{\xi_{x}, x\in \mathbb{Z}^d\}$ be a collection of i.i.d.\ random variables, independent of the $X_i$, 
and   define the process $Z_0 =0$, $Z_n = \sum_{i=1}^n \xi_{S_i}.$ Then $(Z_n)_n$ is commonly referred
to as {\sl random walk in random scenery} and was introduced in Kesten and Spitzer~\cite{Kest79}, 
where functional limit  theorems were obtained for $Z_{[nt]}$ under an appropriate normalization 
for the case $d=1$. The case  $d=2$, with $X_i$ centered with non-singular covariance matrix, was treated in \cite{Bolt89} where
it was shown that $Z_{[nt]}/\sqrt{n\log n}$ converges weakly to Brownian motion.
As  is obvious from the identities $Z_n = \sum_{x\in \mathbb{Z}^d} l(n,x) \xi_x$, $\var(Z_n) = \var[L_n(2)]  \var(\xi_x)$, 
limit theorems for $Z_n$ usually require asymptotics for the local times of the random walk $(S_n)_n$.


Such asymptotics are usually obtained from Fourier techniques 
applied to the characteristic function $f(t)=\rE[\exp(\ri t\cdot X)]$ under the additional assumption of a 
Taylor expansion of the form $f(t)=1- \langle \Sigma t, t\rangle+o(|t|^2)$ 
where $\Sigma$ is the positive definite covariance matrix 
\cite{Bolt89, Bor79, Bry95, GS09,Cerny07}, which further requires that $\rE|X|^2<\infty$ and $\rE X=0$. 
Similar restrictions are also required for the application of local limit theorems such as in \cite{Law91, Lewis93}.

In this paper, motivated by the results of Spitzer~\cite{Spitzer76} for genuinely $d$-dimensional random walks
and the approach of Becker and K\"onig~\cite{Beck09}(see also Asselah~\cite{Asselah} where non-integer $\alpha$ is also treated) we shall study the asymptotic behavior of $\var(L_n(\alpha))$
without imposing any moment assumptions on the random walk. The central idea behind our approach is 
to compare the self-intersection local times $L_n(\alpha)$ of a general $d$-dimensional walk with those of its symmetrised version. 
In addition we will compare the self-intersection local times of a general $d$-dimensional random walk with those of the $d$-dimensional simple symmetric random walk, 
$S_n^{\ep,d}$  which we denote by $L_n(\alpha|\ep,d)$.  
Recall that simple random walk in $\mathbb{Z}^d$ is defined as
$S_0^{\ep,d}:=0$, $S_n^{\ep,d}:= \sum_{j=1}^n X_{j}^{\ep,d}$ for $n\geq 1$,
where for $k=1,\dots, d$ $\rP(X_j^{\ep,d}= \pm e_k) = 1/2d$ and $e_k$ is the $k$-th unit coordinate vector.
It is well-known that with some positive constant $K_{\alpha,d}$,  
$\var[L_n(\alpha|\ep,d)] \sim K_{\alpha,d} v_{d,\alpha}(n)$ where 
\begin{eqnarray*}
v_{1,\alpha}(n) = n^{1+\alpha}, \quad v_{2,\alpha}(n) = n^2 \log(n)^{2\alpha -4}, \quad \;v_{3,\alpha}(n) = n\log(n)\quad \mbox{ and }\; v_{d,\alpha}(n) = n, \quad d\geq 4.\end{eqnarray*}
Several other cases have been treated in the literature, using a variety of methods. 

A careful look at the literature reveals that the most difficult case in $d=2$ 
is the {\it{near transient recurrent}} case, where $\rP(S_n=0)\sim C/n$, which corresponds to genuinely 
$2$-dimensional symmetric recurrent random walks, which will be referred to as a critical case.
%
Surprisingly enough, the variance of the self-intersection local times in the critical case is asymptotically the largest.
\begin{theorem}\label{thm:dbound}
Let $X_i$ be i.i.d., genuinely $d$-dimensional. Then,
$$\var(L_n(\alpha)) \leq c_{\alpha,X}  \var\big(L_n(\alpha|\ep,d)\big)\leq C_{\alpha,X} v_{d,\alpha}(n)\;.$$
\end{theorem}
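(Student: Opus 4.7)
The plan is to follow the three-stage program suggested in the introduction: expand $\var(L_n(\alpha))$ by Fourier inversion in the characteristic function $f$; reduce to the symmetrised walk by passing to $|f|^{2}$; then compare with the simple symmetric random walk using only the genuinely $d$-dimensional hypothesis. The upper bound by $C v_{d,\alpha}(n)$ then follows from the known asymptotic $\var(L_n(\alpha|\ep,d))\sim K_{d,\alpha}v_{d,\alpha}(n)$ already recalled before the statement.

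First I would expand $\var(L_n(\alpha)) = \sum_{\vec i, \vec j} \cov(\one_{A_{\vec i}}, \one_{A_{\vec j}})$, where $A_{\vec i} := \{S_{i_1} = \dots = S_{i_\alpha}\}$ for sorted $\vec i \in \{0,\dots,n\}^{\alpha}$. Since $A_{\vec i}$ depends only on the independent increments $S_{i_{k+1}} - S_{i_k}$,
\begin{equation*}
\p(A_{\vec i}) = \prod_{k=1}^{\alpha-1} (2\pi)^{-d} \int_{\Gamma} f(t)^{i_{k+1} - i_k}\, \rd t.
\end{equation*}
For the joint event $A_{\vec i} \cap A_{\vec j}$, merging the $2\alpha$ times into a single sorted sequence and tracking which of the two tuples each time belongs to rewrites the joint probability as a Fourier integral over $\Gamma^{m}$ of $\prod_k f(\theta_k)^{n_k}$, with the coincidence pattern determining $m$, the exponents $n_k$, and how each $\theta_k$ is built from the integration variables. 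Non-interacting patterns give zero covariance.

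The second stage is symmetrisation. Let $\bar X := X - X'$ with characteristic function $\tilde f := |f|^{2}$, which is real, non-negative, and again genuinely $d$-dimensional. Applying $\bigl|\int \prod f^{n_k}\bigr| \leq \int \prod |f|^{n_k}$ to each Fourier integral and reorganising the combinatorics gives, for a constant $C_\alpha$ depending only on $\alpha$, $\var(L_n(\alpha)) \leq C_{\alpha} \var(\bar L_n(\alpha))$, where $\bar L_n(\alpha)$ denotes the self-intersection local time of the symmetrised walk $\bar S_n := \sum_{j\leq n}(X_j - X'_j)$.

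The third and hardest stage compares $\bar S_n$ with the simple random walk, whose characteristic function is $\phi(t) = d^{-1}\sum_{k=1}^d \cos t_k$. By genuine $d$-dimensionality, $\mathrm{supp}(\bar X)$ contains $d$ linearly independent vectors $x_1,\dots,x_d$ carrying masses $p_j>0$, so
\begin{equation*}
1 - \tilde f(t) = \rE\bigl[1 - \cos(t\cdot \bar X)\bigr] \geq \sum_{j=1}^{d} p_j \bigl(1 - \cos(t\cdot x_j)\bigr).
\end{equation*}
Near the origin, linear independence and a change of variables yield $1 - \tilde f(t) \geq c(1 - \phi(t))$; on the complement of $2\pi\Z^d$ inside $\Gamma$, aperiodicity together with compactness makes $\tilde f$ uniformly bounded away from $1$, so the comparison extends globally. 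Propagating this pointwise inequality through the Fourier representation of Stage 1 and absorbing the resulting time-rescaling into the constants gives $\var(\bar L_n(\alpha)) \leq c'_\alpha \var(L_n(\alpha|\ep,d))$, which combined with Stage~2 yields the first inequality of the theorem; the second is the SRW asymptotic already stated. The main obstacle is this third stage: without any moment assumption one cannot Taylor-expand $\tilde f$ at $0$, so the lower bound $1 - \tilde f \geq c(1 - \phi)$ must be extracted purely from the discrete geometry of $\mathrm{supp}(\bar X)$, and the bipartiteness of SRW forces additional care (e.g.\ working with lazy SRW, or with even-$n$ integrals $\int |\phi|^{cn}$) when converting the pointwise Fourier bound into control of $\p(\bar S_n = 0)$ and of the full covariance sum.
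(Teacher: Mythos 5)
Your proposal captures the right high-level ingredients — Fourier inversion, passage to $|f|^2$, and Spitzer's moment-free bound $1-|f(t)|^2\geq\lambda|t|^2$ for genuinely $d$-dimensional walks — and this is indeed the same toolkit as the paper. But there is a genuine gap in Stage~2, and it is in fact the crux of the whole proof. When you write the variance as $\sum_{\vec i,\vec j}\cov(\one_{A_{\vec i}},\one_{A_{\vec j}})$ and then try to establish $\var(L_n(\alpha))\leq C_\alpha\var(\bar L_n(\alpha))$ by "applying $|\int\prod f^{n_k}|\leq\int\prod|f|^{n_k}$ to each Fourier integral," you lose exactly the cancellation that makes the theorem true. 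The covariance terms fall into interlacement classes: when the tuples genuinely alternate (the paper's $v\geq 3$ case) the negative part can simply be dropped; but when one tuple sits entirely between two consecutive indices of the other ($v=2$), the covariance is a \emph{difference} of two return-probability products, each of which on its own is far too large — for $d\geq 4$ each would contribute order $n^{2}$ rather than the claimed $O(n)$. Taking absolute values in the Fourier integral destroys that cancellation; and symmetrising the two terms separately gives the covariance of the symmetrised walk as a difference of two \emph{larger} quantities, which does not dominate the original covariance in any obvious way. The paper handles this with a dedicated estimate (condition \eqref{eq:B} and the $\psi$-function in Proposition~\ref{pr:main}, derived for i.i.d.\ walks by a telescoping/Abel-summation argument in Proposition~\ref{pr:add2} applied to $\rP(S_m=0)-\rP(S_{m+q}=0)$). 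Without an analogue of that step your Stage~2 inequality is unproven, and the rest of the argument cannot start.

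You also misplace where the real difficulty lies. Stage~3 is not an obstacle at all: Spitzer's inequality already gives $\rP(\bar S_{u+v}=x)\leq c\int_\Gamma e^{-\lambda(u+v)|t|^2/2}\,\rd t\leq c(u+v)^{-d/2}$, which is the correct rate and feeds directly into the general bound with $\phi(m)=m^{-d/2}$; there is no need to match the SRW's Fourier transform pointwise or to worry about its bipartiteness, since one only needs the common upper bound $v_{d,\alpha}(n)$ (the first inequality in the theorem is then a consequence of the known lower asymptotics for $\var(L_n(\alpha|\ep,d))$, not something proved by a direct two-sided comparison). The paper therefore never proves the intermediate inequality $\var(L_n)\leq C\var(\bar L_n)$ that your proposal hinges on; it routes everything through a single pointwise bound on $\rP(S_{a,u}\pm S_{b,v}=x)$ plus the carefully designed variance decomposition.
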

The result was motivated by \cite{Spitzer76} and \cite{Beck09} (and improves related results of Becker and Konig for $d=3$ and $d=4$). 
Several cases treated in \cite{Asselah,Bolt89,Bor79,Chen10,DU11,CGP,Beck09,Lewis93} can then be obtained as particular cases.

Moreover, we also show the surprising reverse, more exactly 
that the right asymptotic of $\var(L_n)$ implies that the jumps must have zero mean and finite second moment.
\begin{theorem}\label{thm:nasc}
Let $X_i$ be i.i.d., genuinely $d$-dimensional and $d=1,2,3$. If
$$
\liminf_{n\to \infty} \frac{\var(L_n(\alpha))}{v_{d,\alpha}(n)} > 0,$$ 
then $\rE|X|^2<\infty$ and $\rE X=0$.
\end{theorem}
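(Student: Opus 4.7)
The plan is to prove the contrapositive: if either $\rE X \neq 0$ or $\rE|X|^2 = \infty$, then $\var(L_n(\alpha)) = o(v_{d,\alpha}(n))$, contradicting $\liminf_n \var(L_n(\alpha))/v_{d,\alpha}(n) > 0$. I split into two regimes according to which condition fails.

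\emph{Regime 1: non-zero drift.} Suppose $\rE|X| < \infty$ with $\mu := \rE X \neq 0$. By the strong law the walk is strongly transient, and a renewal argument shows that the local time $l(n,x)$ is stochastically dominated by a geometric variable with parameter $p = \rP(S_j \neq 0,\text{ for all } j\geq 1)$, uniformly in $n$ and $x$. Hence $\rE \sup_x l(n,x)^{\beta} = O(1)$ for every $\beta > 0$. Writing $L_n(\alpha) = \sum_x l(n,x)^\alpha$ as a sum over at most $n+1$ visited sites, and using the approximate independence of the local times at sites separated by macroscopic distance (traversed in $O(1)$ steps by the drift), one obtains $\var(L_n(\alpha)) = O(n)$. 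Since $v_{d,\alpha}(n)/n \to \infty$ for every $d \leq 3$ and every $\alpha \geq 2$, this yields the desired $o(v_{d,\alpha}(n))$ bound.

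\emph{Regime 2: infinite second moment.} Suppose $\rE|X|^2 = \infty$ (the case $\rE|X| = \infty$ is included). Here I would reopen the Fourier-analytic machinery underpinning Theorem~\ref{thm:dbound}: writing $L_n(\alpha)$ as a sum of coincidence indicators and applying Fourier inversion, $\var(L_n(\alpha))$ can be represented and bounded above by an integral over $\Gamma$ whose integrand is a polynomial-like function of $1 - |f(t)|^2$ saturated at the scale $|t| \sim 1/\sqrt{n}$, schematically
$$\var(L_n(\alpha)) \;\leq\; C_{\alpha} \int_{\Gamma} K_n(t)\, \Psi_{d,\alpha}\!\bigl(1-|f(t)|^2\bigr)\, dt,$$
with explicit $\Psi_{d,\alpha}$ and $K_n$. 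For centered walks with finite variance, $1 - |f(t)|^2 \sim \langle \Sigma t, t\rangle$ near the origin and this integral is $\asymp v_{d,\alpha}(n)$; when $\rE|X|^2 = \infty$, the ratio $(1-|f(t)|^2)/|t|^2$ is unbounded along some sequence $t \to 0$, and a truncation argument together with dominated convergence shows that the integral is $o(v_{d,\alpha}(n))$.

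\emph{Main obstacle.} The delicate step is Regime~2: the upper bound on $\var(L_n(\alpha))$ must be sharp enough to detect the loss of the quadratic expansion of $1-|f(t)|^2$ near the origin. The critical dimension $d=2$ is the worst case, since $v_{2,\alpha}(n) = n^2 (\log n)^{2\alpha-4}$ carries logarithmic corrections that are sensitive to the precise rate at which $1-|f(t)|^2$ vanishes, so one has to separate the contribution of a neighborhood of $t=0$ from the rest of $\Gamma$ carefully, in both the variance representation and the normalizing integral, in order to preserve these logs.
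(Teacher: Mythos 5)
Your high-level strategy is the same as the paper's: pass to the contrapositive and split into the two cases ``finite variance but non-zero drift'' and ``infinite second moment.'' The paper's actual argument, however, is substantially more precise in both regimes, and there is a genuine gap in your treatment of Regime~2.

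\textbf{Regime 1.} You reason probabilistically via geometric domination of local times and ``approximate independence,'' which is plausible but not quantified. The paper instead stays entirely in the Fourier/return-probability framework: with $\mu=\rE X\neq 0$ one writes $\rP(S_n=0)=\rP(S_n'=-n\mu)$ for the centered walk $S_n'$ and deduces $\rP(S_n=0)=o(n^{-d/2})$ from a local CLT estimate, after which the variance bound follows from the same machinery (Proposition~\ref{pr:add2}) already used for Theorem~\ref{thm:dbound}. Your approach would need work to make rigorous, but is not fundamentally wrong.

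\textbf{Regime 2 --- the gap.} You correctly identify that the problem is to show the Fourier integral is $o(v_{d,\alpha}(n))$ when $\rE|X|^2=\infty$, and that $d=2$ is the critical case. But the claim that ``the ratio $(1-|f(t)|^2)/|t|^2$ is unbounded along some sequence $t\to 0$, and a truncation argument together with dominated convergence shows the integral is $o(v_{d,\alpha}(n))$'' does not go through. Unboundedness of $(1-|f(t)|^2)/|t|^2$ along a single sequence is far too weak: what is needed is a quantitative statement about the Lebesgue measure of the sublevel sets $\{t\in\Gamma : 1-f(t)\le x\}$, and this is precisely the content of the paper's Lemma~\ref{lm:extra}. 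That lemma partitions the sphere $S^{d-1}$ into directions $e$ where the truncated variance $\sigma_{e,L}=\rE[(e\cdot X)^2\mathbf{I}(|X|\le L)]$ is large versus small, uses $\rE|X|^2=\infty$ together with genuine $d$-dimensionality to show the bad set of directions has measure $o(1)$, and concludes $\lambda_d\{t:1-f(t)\le x\}=o(x^{d/2})$. Converting this into $\int_\Gamma |f(t)|^n\,dt \le h_n n^{-d/2}$ with a \emph{monotone slowly varying} $h_n\to 0$ then requires a Laplace-transform step $\int_\Gamma f^n = n\int_0^\infty e^{-nx}F(x)\,dx$ and a further construction to regularize the resulting $\epsilon_n$ into a monotone slowly varying sequence; none of this is a ``dominated convergence'' argument. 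Without Lemma~\ref{lm:extra} or an equivalent, your Regime~2 bound does not follow. You also do not address the $d=1$ case, which the paper treats separately via $\var(L_n(\alpha|\epsilon,1))\sim C[\rE L_n(\alpha|\epsilon,1)]^2$ and the observation that $\rE L_n = o(n^{(1+\alpha)/2})$ under either failure mode.
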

As it follows from Theorem \ref{thm:asymptotic}, given below, for $d=2,3$ and Theorem 5.2.3 in Chen \cite{Chen10} for $d=1$, 
if $\rE X=0$ and $0<\rE|X|^2<\infty$, then $\liminf_n \var[L_n(\alpha)]/v_{d,\alpha}(n)>0$.
%

For general genuinely  $d$-dimensional random walks with finite second moments and zero mean, the asymptotic 
behavior is similar to $d$-dimensional simple symmetric random walk, again the most complicated case being $d=2$.
Also, as it follows from our general bounds (see Proposition  \ref{pr:main} and Corollary \ref{cor_stan2}), 
the asymptotics for the 
genuinely $d$-dimensional random walk can be reproduced by those of 
the symmetric one-dimensional random walk with appropriately chosen heavy tails, as was indicated by 
Kesten and Spitzer~\cite{Kest79}. The proofs are based on adapting the Tauberian approach developed in \cite{DU11}.
\begin{theorem}\label{thm:asymptotic}
Let $d=1,2,3 $, and suppose that for $t\in[-\pi,\pi]^d$ we have
\begin{align}
f(t) = 1- \gamma |t| + R(t), \,\, \mbox{for $d=1$, or} \qquad
f(t) = 1- \langle \Sigma t, t\rangle + R(t), \,\, \mbox{for $d=2,3$,}
\label{eq:expansion1}
\end{align}
where $\Sigma$ is a non-singular covariance matrix and 
$R(t) = o(|t|)$ for $d=1$, and $o(|t|^2)$ for $d=2,3$ as $t\to 0$.
Then 
\begin{equation*}
\var(L_n (\alpha)) 
\sim 
\begin{cases}
\frac{(\pi^2 + 6)}{12}
\frac{(\alpha !)^2 (\alpha -1 )^2}{(\gamma \pi)^{2\alpha-2}} n^2 \log(n)^{2\alpha -4}
,&\mbox{for $d=1$,}\\
\frac{(\alpha !)^2 (\alpha -1 )^2}{2\big(2 \pi \sqrt{|\Sigma|} \big)^{2\alpha-2}} n^2 \log(n)^{2\alpha -4} ( \kappa +1 )
& \qquad\mbox{for $d=2$, and }\\
 (\kappa_1 + \kappa_2) n \log n,& \mbox{for $d=3$, $\alpha =2$,}
\end{cases}
\end{equation*}
where 
$\kappa = \int_0^\infty\int_0^\infty \rd r \rd s
\Big[(1+r)(1+s) \sqrt{(1+r+s)^2 - 4 r s}\Big]^{-1}- \pi^2/6$ and $\kappa_1$, $\kappa_2$ are defined in \eqref{eq:kappa1} and \eqref{eq:kappa2} respectively.

Moreover, if $L'(n,\alpha)$ is the self-intersection local time of another random walk whose characteristic function also satisfies \eqref{eq:expansion1} then $L'(n,\alpha) =L(n,\alpha) (1+ o(1))$.
\end{theorem}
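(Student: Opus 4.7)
The plan is to extend the Tauberian strategy of \cite{DU11} (which handled $\alpha=2$) to general $\alpha$. The starting point is the Fourier representation
\[
\mathbf{I}(S_{i_1}=\cdots=S_{i_\alpha})
= \frac{1}{(2\pi)^{d(\alpha-1)}}\int_{\Gamma^{\alpha-1}} \prod_{k=1}^{\alpha-1}\re^{\ri t_k\cdot (S_{i_k}-S_{i_\alpha})}\,\rd t_1\cdots\rd t_{\alpha-1},
\]
which turns $L_n(\alpha)$ into a multi-index sum of integrals whose expectations, after integrating out the increments of $(S_j)$, become products of the characteristic function $f$ evaluated at linear combinations of the $t_k$'s.

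I would first decompose $\var(L_n(\alpha))=\E[L_n(\alpha)^2]-(\E L_n(\alpha))^2$ according to the combinatorial overlap pattern between the two $\alpha$-tuples of time indices. The "pure product" pattern (disjoint tuples) is exactly cancelled by $(\E L_n(\alpha))^2$, and a direct count shows that the dominant pattern consists of two chains sharing a single coincidence point; this is precisely where the combinatorial prefactor $(\alpha!)^2(\alpha-1)^2$ arises. Subdominant overlap patterns contribute to lower order and are controlled by Proposition \ref{pr:main}. Passing to the generating function $G(s)=\sum_{n\ge 0}s^n\,\var(L_n(\alpha))$ and performing the increment substitutions $\delta_k=i_k-i_{k-1}$ converts the time sums into geometric series, so that $G(s)$ reduces, modulo lower-order terms, to a multi-dimensional integral over $\Gamma^{2(\alpha-1)}$ whose integrand is a product of kernels $(1-sf(t))^{-1}$ coupled by the constraint from the shared join time.

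The asymptotic analysis as $s\to 1^-$ proceeds by splitting each integration region into a "central" part near the origin and a "bulk" part bounded away from it. In the central region, rescale $t\mapsto u\,\sqrt{1-s}$ for $d=2,3$ (respectively $t\mapsto u\,(1-s)$ for $d=1$) and apply \eqref{eq:expansion1} to obtain $1-sf(t)\sim (1-s)+\langle \Sigma u,u\rangle$ (respectively $(1-s)+\gamma|u|$). This reduces the central contribution to a universal integral over $\R^{d(\alpha-1)}$ depending only on $\Sigma$ or $\gamma$: after a linear change of variables removing $\Sigma$ (producing the factor $|\Sigma|^{-1/2}$ from the Jacobian for each of the $\alpha-1$ loops) one obtains the explicit constant $\kappa+1$ for $d=2$ from an elementary two-variable integral, and $\kappa_1+\kappa_2$ for $d=3,\alpha=2$ from two naturally arising subregions. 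A Karamata-type Tauberian theorem then transfers the singularity $G(s)\sim C(1-s)^{-\rho}\ell(1/(1-s))$ into the stated asymptotic for $\var(L_n(\alpha))$, and the universality statement at the end follows because the limiting integral depends on $f$ only through its quadratic (or linear) part.

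The main obstacle, as in \cite{DU11}, is the uniform control of the remainder $R(t)$ in \eqref{eq:expansion1} across the central region, so that one may legitimately replace $1-sf(t)$ by its quadratic surrogate before taking $s\to 1^-$. Away from the origin genuine $d$-dimensionality gives only $|f(t)|<1$ pointwise, without a quantitative modulus, so the bulk estimate has to be obtained indirectly via the upper bound in Theorem \ref{thm:dbound} to show that the bulk contributes $o$ of the central one. A secondary difficulty is tracking the precise power of $\log n$: the exponent $2\alpha-4$ in $d=2$ reflects a slow logarithmic divergence at large rescaled $|u|$ occurring in $\alpha-2$ of the $\alpha-1$ rescaled variables, and isolating exactly this combinatorial count (rather than e.g. $2\alpha-2$ or $\alpha-2$) requires a careful Mellin-type bookkeeping of the radial integrals, which is where the Tauberian approach of \cite{DU11} must be genuinely adapted rather than merely quoted.
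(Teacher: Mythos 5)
Your overall template matches the paper's: write out $\var(L_n(\alpha))$ via Fourier representations, classify by overlap pattern, pass to generating functions, rescale $t\mapsto t/\sqrt{1-\lambda}$ (resp.\ $t/(1-\lambda)$ for $d=1$) using \eqref{eq:expansion1}, and finish with a Tauberian step. However there are two substantive gaps.

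First, the Tauberian step is not ``Karamata-type'' in the $v(\delta)=2$ case, and this is the crux of the proof. The paper partitions the variance by the interlacement index $v(\delta)$ (defined in the proof of Proposition~\ref{pr:main}); both $v=2$ and $v=3$ contribute to leading order. In the $v=3$ case the positive term $a(n)$ and negative term $b(n)$ each have the correct magnitude $n^2\log(n)^{2\alpha-4}$ and are nonnegative sums of products of return probabilities, so classical Tauberian arguments apply to each separately. In the $v=2$ case the typical term $c(n)$ (introduced in \eqref{c(n)term}) is a sum of \emph{differences} of probabilities $\rP(S_m=0)-\rP(S_{m+q}=0)$ that only achieves the right order when the two halves are combined; as the paper explicitly notes, this ``forbids the use of Karamata's Tauberian theorem since the monotonicity restriction would require roughly that $X_i$ is symmetrized,'' and instead the complex Tauberian machinery of \cite{DU11} with $\lambda\in\mathbb{C}$, $|\lambda|<1$, is needed. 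Your proposal applies a Karamata-type transfer to the single generating function $G(s)=\sum_n s^n\var(L_n(\alpha))$, and identifies the ``main obstacle'' as uniformity in $R(t)$ and the ``secondary difficulty'' as Mellin-type bookkeeping of the $\log$ power; the monotonicity obstruction that actually forces the complex Tauberian route is not identified, and the proposed single-generating-function Karamata step would not give the coefficient asymptotics without it.

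Second, the combinatorial reduction is glossed. You state the dominant pattern is ``two chains sharing a single coincidence point'' and credit it with the full prefactor $(\alpha!)^2(\alpha-1)^2$. In fact two distinct patterns are dominant: $v=3$ (two maximal blocks of $\mathbf{l}$ interlaced with $\mathbf{k}$), contributing $\pi^2/12$ with multiplicity $(\alpha-1)^2$ from the partitions $2\alpha=r+s+(\alpha-r)+(\alpha-s)$, and $v=2$ (all of $\mathbf{l}$ inside a single gap $[k_j,k_{j+1}]$), contributing $1/2$ with multiplicity $(\alpha-1)^2$ as well; only their sum produces the $(\pi^2+6)/12$ in the $d=1$ formula and, analogously, $\kappa+1$ for $d=2$ and $\kappa_1+\kappa_2$ for $d=3$. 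Without this two-case split, the constants $\kappa$, $\kappa_1$, $\kappa_2$ are not recoverable: $\kappa_1$ arises from the $v=2$ generating function \eqref{eq:kappa1} and $\kappa_2$ from $a(n)-b(n)$ in \eqref{eq:kappa2}, not from ``two subregions'' of a single integral.
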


The methods developed in this paper are used by the first author and K.~Zemer in \cite{DZ15} to prove that the range of 1-stable random walk in $\mathbb{Z}$ and simple random walk in $\mathbb{Z}^2$ has the F\"olner property and therefore to compute the relative complexity of random walk in random scenery in the sense of Aaronson~\cite{Aaronson}.


\section{Proofs}
%
\subsection{General bounds}
We first develop a technique to treat random walks with independent but not necessarily identically distributed increments.
\begin{proposition}\label{pr:main} {\rm (General upper bound)}
Assume that $X_i$ are independent $\mathbb{Z}^d$-valued random variables and let $S_{u,v}:=X_u+\ldots+X_{u+v}$. 
Suppose further that for all $n\in \mathbb{N}$, and integers $a,u, b,v \geq 0$, with $a+u\leq b$ and any $x\in \mathbb{Z}^d$ we have
\begin{align}\label{eq:A}\tag{A}
\rP \Big( S_{a,u} \pm S_{b,v} = x \Big) &\leq \phi(u+v),\\
\label{eq:B}\tag{B}
\rP\Big( S_{a,u} = 0\Big) - \rP\Big( S_{a,u}+ S_{b,v} = 0\Big) &\leq \psi(u,v),
\end{align}
where  $\phi(u)$  is non-{\sl increasing}, 
$\psi(u,v)$ is non-{\sl increasing} in $u$ and is non-{\sl decreasing} and sub-additive in $v$ in the sense that 
$\psi(u, v+w) \leq A_{\psi}\big[ \psi(u,v) + \psi(u, w) \big]$, for some constant $A_{\psi}$ independent of $u, v, w$. 
Then, for some constant $K = cA_{\psi}(1+A_{\psi})^{\alpha -2}$ depending only on $\alpha$
\begin{equation*}
  \var(L_n(\alpha))
  \leq K n \Big( \sum_{i=0}^{n-1} \phi(i) \Big)^{2\alpha -4} \sum_{i,j,k=0}^{n-1}
  \big[ \phi(j\vee i) \phi(k\vee i) + \phi(j) \psi(i+k, j)\big].
\end{equation*}
\end{proposition}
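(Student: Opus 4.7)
The plan is to expand $\var(L_n(\alpha))$ as a double sum over pairs of $\alpha$-tuples of time indices and to control each term by the interleaving structure of the merged time sequence. First, starting from
\begin{equation*}
\var(L_n(\alpha)) = \sum_{\vec i,\vec j\in\{0,\dots,n\}^\alpha}
\cov\bigl(\mathbf{I}(S_{i_1}=\cdots=S_{i_\alpha}),\,\mathbf{I}(S_{j_1}=\cdots=S_{j_\alpha})\bigr),
\end{equation*}
I use permutation invariance within each tuple to restrict to ordered tuples $i_1\le\cdots\le i_\alpha$ and $j_1\le\cdots\le j_\alpha$, absorbing the resulting $(\alpha!)^2$ into the constant.

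Second, I merge each pair of ordered tuples into a single sequence $t_1\le t_2\le\cdots\le t_{2\alpha}$ together with a two-colouring $c\in\{R,B\}^{2\alpha}$ recording the origin of each time point. The events on the two sides then become conjunctions of zero-sum conditions on consecutive R-runs (respectively B-runs) of the independent increments $Y_k:=S_{t_{k+1}}-S_{t_k}$. When the colour sequence is of the form $R^\alpha B^\alpha$ or $B^\alpha R^\alpha$ the two events are measurable with respect to disjoint collections of $Y_k$'s and the covariance vanishes, so only genuinely interleaved colourings contribute.

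Third, for each interleaved colouring I bound the covariance in terms of $\phi$ and $\psi$ evaluated at the $2\alpha-1$ gap lengths $u_k:=t_{k+1}-t_k$. The two prototypical local patterns are the \emph{crossing} RBRB, for which direct expansion combined with (A) applied to each joint and marginal return-to-zero probability yields a bound of the shape $\phi(u_p\vee u_q)\phi(u_q\vee u_r)$, and the \emph{nested} RBBR, for which a direct computation gives
\begin{equation*}
\cov \;=\; \rP(V=0)\bigl[\rP(A+C=0)-\rP(A+V+C=0)\bigr],
\end{equation*}
where $A,V,C$ are the independent increments before, inside, and after the nested block; the factor $\rP(V=0)$ is bounded by $\phi(u_q)$ via (A), and the bracket by $\psi(u_p+u_r,u_q)$ via (B) after a conditioning argument on $C$. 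For $\alpha\ge 3$ a general interleaved configuration reduces, by telescoping and merging the intervening R- and B-sub-structure via the sub-additivity of $\psi$, to one of these two prototypes, with the $2\alpha-4$ remaining gaps bounded marginally by (A); the accumulated constant comes out as $A_\psi(1+A_\psi)^{\alpha-2}$.

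Fourth, summing the pointwise bound over all $t$-configurations yields a factor $n$ from the starting point $t_1\in\{0,\dots,n-1\}$, a factor $\sum_{i=0}^{n-1}\phi(i)$ for each of the $2\alpha-4$ non-critical gaps, and the triple sum $\sum_{i,j,k=0}^{n-1}[\phi(j\vee i)\phi(k\vee i)+\phi(j)\psi(i+k,j)]$ for the three critical gaps, giving exactly the claimed bound. The main obstacle is Step 3: verifying uniformly across all $\binom{2\alpha}{\alpha}$ colour patterns that the covariance admits a bound involving only three "critical" gaps, and checking that the telescoping/sub-additivity argument correctly merges all the intervening structure into a single $\phi\phi$ or $\phi\psi$ contribution at cost at most $A_\psi(1+A_\psi)^{\alpha-2}$.
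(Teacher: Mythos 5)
Your overall architecture matches the paper's: expand the variance over ordered pairs of $\alpha$-tuples, classify by the colouring of the merged sequence, observe that $R^\alpha B^\alpha$ and $B^\alpha R^\alpha$ give vanishing covariance, and aim for bounds with three ``critical'' gaps and $2\alpha-4$ marginal $\phi$-factors. That part is right.

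The gap is in how you claim to handle a general interleaved colouring. You assert that for $\alpha\geq 3$ every interleaved pattern ``reduces, by telescoping and merging the intervening R- and B-sub-structure via the sub-additivity of $\psi$,'' to the crossing or nested prototype. This is not how the two regimes actually split, and I do not see how to make your version of the reduction work. The paper's treatment dichotomizes on the interlacement index $v(\delta)=\sum|\delta_i|$. For $v\geq 3$ (which includes your crossing prototype and every more tangled pattern) the negative product term $-\rP[\cdot]\rP[\cdot]$ is \emph{simply dropped}; the remaining joint probability factors, via Fourier inversion of the gap increments, into at most $v$ nontrivial pieces bounded using \eqref{eq:A} alone, and the reduction to three critical gaps comes from the elementary monotone recursion $\Delta_{n,v}\leq G_n\Delta_{n,v-1}\leq\cdots\leq G_n^{v-3}\Delta_{n,3}$, where $G_n=\sum_{i\leq n}\phi(i)$ --- sub-additivity of $\psi$ plays no role at all here. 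Sub-additivity enters only in the $v=2$ (nested) case, where the difference structure must be retained because neither term alone is small enough; there it is used to split the second argument of $\psi\big(m_0+m_\alpha,\,m_1+\cdots+m_{\alpha-1}\big)$ across the $\alpha-1$ inner gaps, producing the $A_\psi(1+A_\psi)^{\alpha-2}$ constant. Without the observation that the negative term can be discarded whenever $v\geq 3$, your strategy would force you to control a nontrivial cancellation in every interleaved pattern, and the $\psi$-based telescoping you describe does not supply that. A secondary, more minor point: your ``conditioning argument on $C$'' to invoke \eqref{eq:B} needs to be spelled out, since in the nested case the middle block sits between the two pieces that make up $S_{a,u}$ rather than after them; the paper glosses over this as well, but you should not treat it as automatic.
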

\begin{proof}[Proof  of Proposition~\ref{pr:main}]
We first write out the variance as a sum
\begin{align}
  \var L_n(\alpha)
&= (\alpha !)^2\sum_{k_1 \leq \cdots \leq k_{\alpha}}\sum_{l_1 \leq \cdots \leq l_{\alpha}}
\Big(\rP \big[ S_{k_1} = \cdots = S_{k_{\alpha}}, S_{l_1} = \cdots = S_{l_{\alpha}}\big]\label{eq:var}\\
&\quad -\rP\big[ S_{k_1} = \cdots = S_{k_{\alpha}}\big]\rP \big[S_{l_1} = \cdots S_{l_{\alpha}}\big]\Big)\nonumber.
\end{align}
An important role is played by the manner in which the two sequences are interlaced,
since for example if $k_\alpha \leq l_1$ or $l_\alpha \leq k_1$, the term vanishes by the Markov property.
Let's assume, without loss of generality, that $k_1\leq l_1$ and we arrange the two sequences in an ordered sequence of combined length $2\alpha$ which we denote as
$(p_1, \dots, p_{2\alpha})$; we also define $(\epsilon_1, \dots, \epsilon_{2\alpha})$ where
$\epsilon_i =0$ if $p_i$ came from $\mathbf{k} := \{k_1, \dots, k_{\alpha}\}$,
and $\epsilon_i=1$ if $p_i$ came from $\mathbf{l} := \{l_1, \dots, l_{\alpha}\}$.
Finally we define two new sequences $m_0,m_1, \dots, m_{2\alpha-1}$, and $\delta_1, \dots, \delta_{2\alpha -1}$,
where $m_0:=p_1$, $m_i= p_{i+1}-p_i$ and
$\delta_i= \epsilon_{i+1}-\epsilon_i$, for $i=1, \dots, 2\alpha -1$.
Notice that since we assume that $k_1 \leq l_1$, we have $p_1 = k_1$ and $\epsilon_1 = 0$. Let $v(\delta) := \sum_{i=1}^{2\alpha -1} |\delta|$, denote the \textsl{interlacement index}.
The terms with $v =1$ vanish, while the terms with $v=2$ will be considered separately.

We first consider the sum $I_n$ of the terms with $v \geq 3$ for which we drop the negative part and sum over the free index $m_0=k_1$ to obtain the bound
$$
I_n \leq c(\alpha)n\sum_{m_1, \dots,  m_{2\alpha-1}}
\sum_{x\in \mathbb{Z}^d} \prod_{t=1}^{2\alpha -1}
\,\sup_w\rP \big( S_{w,m_t}= \delta_t x\big),$$
where $c(\alpha)$ denotes generic constants depending only on $\alpha$, which may change from line to line.
Of these $2\alpha -1$ $\delta$'s, exactly $u:= 2\alpha -1 - v$ are equal to 0, and therefore
$$I_n \leq c(\alpha) n \Big[\sum_{i=0}^n \phi(i)\Big]^u
\sum_{j_1, \dots, j_{v}=0}^n \sum_{x} \prod_{t=1}^{v}
            \sup_{w_1,\ldots,w_v} \rP(S_{w_t,j_t} = \delta_t x).$$
Notice that if $S^{(1)}, \dots, S^{(v)}$ denote independent random walks then, assuming without loss of generality that $j_1 \leq \dots \leq j_v$,  we have that
\begin{align}
\sum_{x} \prod_{t=1}^{v} \rP(S_{w_t, j_t} = \delta_t x)
&\leq  \Big( \prod_{t=2}^{v-1} \max_{x} \rP( S^{(t)}_{j_t} = x) \Big)
\rP(S_{j_1}^{(1)} = \delta_v S_{j_v}^{(v)})\label{In}\\
&\leq \phi(j_1 + j_v)\prod_{t=2}^{v-1} \phi(j_t)
  \leq \prod_{t=2}^v \phi( j_t \vee j_1)\nonumber.
\end{align}
Writing $G_n:=\sum_{i=0}^n \phi(i)$, since $\phi$ is non-increasing 
we have that
\begin{align*}
\Delta_{n,v} &:= \!\!\sum_{0\leq j_1 \leq \cdots \leq j_v \leq n} 
\prod_{t=2}^v \phi(j_t \vee j_1)\leq \sum_{j_v=0}^n \phi(j_v) \!\!\sum_{0\leq j_1 \leq \cdots \leq j_{v-1} \leq n} 
\prod_{t=2}^{v-1} \phi(j_t \vee j_1) = G_n \Delta_{n,v-1},
\end{align*}
and repeating this procedure, for $v\geq 3$ we have that
$\Delta_{n,v}\leq \Delta_{n, 3} G_n^{v-3}$.
Combining the two bounds and summing over $v=3, \dots, 2\alpha -1$, we have the upper bound
$$\sum_{v=3}^{2\alpha -1}c(\alpha ) n G_n^{2\alpha -1 -v} \Delta_{n,v}
 \leq  c(\alpha ) n G_n^{2\alpha -1 -v + v -3} \Delta_{n,3} = c(\alpha ) n G_n^{2\alpha -4} \Delta_{n,3}.$$
Next we consider the sum $J_n$ over the terms with $v = 2$, which occurs when for some $j$, the indices $l_1 ,\dots, l_{\alpha}$ all lie in $[k_j, k_{j+1}]$.
Then it is easy to see that this sum $J_n$ is bounded above by
\begin{align*}
J_n 
&\leq c(\alpha)n \sup_{w_0,\ldots, w_{2\alpha-1}}
\sum_{m_{\alpha+1}, \cdots, m_{2\alpha-2}=0}^n 
\prod_{r=\alpha+1}^{2\alpha-2} \rP(S_{w_r,m_r} = 0) \\
&\quad \times 
\sum_{m_0, \cdots, m_\alpha =0}^n 
\bigg[\prod_{t=1}^{\alpha -1}
\rP(S_{w_t,m_t} = 0)\bigg]
 \Big[ \rP(S_{w_0,m_0}+S_{w_\alpha,m_\alpha}=0) - 
\rP(S_{w_0,m_0}+\ldots+S_{w_\alpha,m_\alpha}=0) \Big]\\
&\leq c(\alpha)n G_n^{\alpha -2}
\sup_{w_0,\ldots, w_{\alpha}}\\
&\quad \times\sum_{m_0, \cdots, m_\alpha=0}^n \bigg[\prod_{t=1}^{\alpha -1}
\rP(S_{w_t,m_t} = 0)\bigg] 
\Big[ \rP(S_{w_0,m_0}+S_{w_\alpha,m_\alpha}=0) - 
\rP(S_{w_0,m_0}+\ldots+S_{w_\alpha,m_\alpha}=0) \Big]\\
&\leq c(\alpha) n G_n^{\alpha -2} \sum_{m_0, \cdots, m_\alpha=0}^n
\bigg[ \prod_{t=1}^{\alpha -1} \phi(m_t) \bigg]\psi(m_0+m_{\alpha}, m_1 + \cdots + m_{\alpha-1})\\
&\leq c(\alpha) n G_n^{\alpha -2} A_{\psi}(1+A_{\psi})^{\alpha -2}
 \Bigg( \sum_{m_2, \cdots, m_{\alpha-1} } 
\prod_{t=2}^{\alpha -1}  \phi(m_t) \Bigg)
\times\sum_{m_0, m_1, m_{\alpha}} \phi(m_1) \psi(m_0+m_{\alpha}, m_1 ) \\
&\leq c(\alpha) A_{\psi} (1+A_{\psi})^{\alpha -2}  n G_n^{2\alpha -4} \sum_{i,j,k=0}^n \phi(j)\psi(i+k,j).\qedhere
\end{align*}
\end{proof}
The following corollary provides explicit bounds in the cases that are usually considered in the literature.
\begin{corollary}\label{cor_stan1}
Assume that the conditions of Proposition~\ref{pr:main} are satisfied with $\phi(m) = Tm^{-r}$ and
$\psi(m,k) = T m^{-r-1}(k\wedge m) $. Then, 
$$\var(L_n(\alpha)) \leq c(\alpha) T^{2\alpha -2}\times
\begin{cases}
n^2 \log(n) ^{2\alpha -4}, &\mbox{if $r=1$},\\
n^{4-2r}, &\mbox{if $1<r<3/2$,}\\
n \log(n), &\mbox{if $r=3/2$, and }\\
n, &\mbox{if $r>3/2$}.
\end{cases}
$$
\end{corollary}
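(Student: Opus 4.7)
The plan is to substitute $\phi(m)=Tm^{-r}$ and $\psi(m,k)=Tm^{-r-1}(k\wedge m)$ into the conclusion of Proposition~\ref{pr:main} and to estimate each resulting factor separately by elementary asymptotics, treating the four ranges of $r$ one at a time. Writing
\[
G_n := \sum_{i=0}^{n-1}\phi(i), \qquad S_1 := \sum_{i,j,k=0}^{n-1}\phi(j\vee i)\phi(k\vee i), \qquad S_2 := \sum_{i,j,k=0}^{n-1}\phi(j)\,\psi(i+k,j),
\]
Proposition~\ref{pr:main} reads $\var(L_n(\alpha)) \leq K_\alpha\,n\,G_n^{2\alpha-4}(S_1+S_2)$, so everything reduces to knowing the orders of $G_n$, $S_1$ and $S_2$. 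A direct calculation gives $G_n \asymp T\log n$ if $r=1$ and $G_n = O(T)$ if $r>1$; in particular the factor $\log^{2\alpha-4}n$ of the critical case lives entirely in $G_n^{2\alpha-4}$.

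For $S_1$ I would fix $i$ and factor the inner double sum as $\bigl(i\phi(i) + \sum_{j\geq i}\phi(j)\bigr)^2$, which is of order $T(1+\log(n/i))$ when $r=1$ and of order $T i^{1-r}$ when $r>1$. Summing the squares over $i$ then yields $S_1 \asymp T^2 n$ in the critical case and $T^2\sum_{i\geq 1} i^{2-2r}$ otherwise; standard asymptotics then give $n^{3-2r}$, $\log n$ or $O(1)$ in the three ranges $1<r<3/2$, $r=3/2$ and $r>3/2$. For $S_2$ I would change variables to $s=i+k$, note the $O(s)$ representations, and split the resulting one-dimensional sum at $s=j$ to handle the $j\wedge s$ cutoff in $\psi$; this reduces the inner sum to a bound of order $Tj^{2-r}$ for $r>1$ and $Tj\log(n/j)$ for $r=1$. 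Multiplying by $\phi(j)=Tj^{-r}$ and summing over $j$ then reproduces exactly the same four asymptotics as for $S_1$.

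Assembling $n\,G_n^{2\alpha-4}(S_1+S_2)$ in each regime then matches the four cases in the statement. The only real obstacle is careful bookkeeping at the two borderline exponents $r=1$ and $r=3/2$, where logarithmic factors arise from comparing the tails $\sum_{s\geq j}s^{-r}$ with partial sums, together with an essentially cosmetic adjustment near $i=0$ where $\phi$ is formally singular; no step requires any idea beyond Proposition~\ref{pr:main} and sum-integral comparison.
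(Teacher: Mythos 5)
Your proof is correct, and it follows the natural approach that the paper itself intends: the corollary is stated without a separate proof as an immediate consequence of Proposition~\ref{pr:main}, and verifying it amounts exactly to what you do—substituting $\phi(m)=Tm^{-r}$ and $\psi(m,k)=Tm^{-r-1}(k\wedge m)$, factoring the $j,k$-sum for fixed $i$ as $\bigl(i\phi(i)+\sum_{j\geq i}\phi(j)\bigr)^2$, passing to $s=i+k$ with the $O(s)$ multiplicity in the $\psi$-term, and handling the four ranges of $r$ by elementary sum–integral comparison. The bookkeeping at $r=1$, $r=3/2$, and at $i=0$ is exactly as you describe, and the assembled orders $n\,G_n^{2\alpha-4}(S_1+S_2)$ match the four displayed cases with the stated $T^{2\alpha-2}$ factor.
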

Several relevant results treated so far in \cite{Beck09, Bolt89, CGP, Cerny07, Chen10,  DU11, Law91, Lewis93}
are not only obtained as a special case but also extended to the case of independent 
but not necessarily identically distributed variables, for example by applying the local limit theorem, as it is conducted in \cite{Law91}.

Also when $X_i$ is in the domain of attraction of the one-dimensional symmetric Cauchy law 
(\cite{DU10,DU11}), or in the case of strongly aperiodic planar random walk with second moments (\cite{Bolt89, CGP, Cerny07, Law91, Lewis93}), it is well known that the conditions of Proposition  
\ref{pr:main} are satisfied with $\phi(m) = T/m$ and $\psi(m,k) = T m^{-2}(k\wedge m) $.

However, we can do better for symmetrized variables and show that condition 
(\ref{eq:A}) implies (\ref{eq:B}), which together with the comparison technique motivate
the following results.


\begin{proposition}[Bound via comparison with symmetrised]\label{pr:add1}
Let $X_i$ be independent, $d$-dimensional random variables and $f_i(t):= \rE\exp(\ri t X_i)$, and assume that  there exists a non-negative measurable function $f(t),$ $0\leq f(t)\leq 1$ 
and positive non-increasing sequence $\phi(m)$ such that
\begin{eqnarray}\label{eq:C}
|1-f_i(t)|\leq	 Tf(t),\quad |f_i(\pm t)|\leq f(t), \quad\mbox{ and }\int_\Gamma f(t)^m dt\leq\phi(m),
\end{eqnarray}
for all $i, m \geq 0$, and $t\in \Gamma$. 
Then, for some constant $K=c(\alpha,d,T)$
\begin{equation*}
  \var(L_n(\alpha))
  \leq K n \Big( \sum_{i=0}^{n-1} \phi([i/2]) \Big)^{2\alpha -4} \sum_{j=0}^{n}j\phi([j/2])\sum_{k=j}^{{2}n} \phi([k/2])=:\Delta_n(\alpha, \phi).
\end{equation*}
\end{proposition}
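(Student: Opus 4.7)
The plan is to reduce to Proposition \ref{pr:main} by verifying its structural hypotheses (A) and (B) from the characteristic function condition (C) via Fourier inversion on $\Gamma$, and then to simplify the resulting triple sum into the claimed form.

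For (A), Fourier inversion combined with $|f_i(\pm t)|\leq f(t)$ gives
\begin{equation*}
\rP(S_{a,u}\pm S_{b,v}=x)\leq (2\pi)^{-d}\int_\Gamma f(t)^{u+v+2}\,dt \leq c\,\phi(u+v+2) \leq c\,\phi([(u+v)/2])
\end{equation*}
by monotonicity of $\phi$. For (B), one writes
\begin{equation*}
\rP(S_{a,u}=0)-\rP(S_{a,u}+S_{b,v}=0)=(2\pi)^{-d}\int_\Gamma\prod_{i\in[a,a+u]} f_i(t)\Bigl[1-\prod_{j\in[b,b+v]} f_j(t)\Bigr]dt,
\end{equation*}
and applies the inequality $\bigl|1-\prod_j z_j\bigr|\leq \min\bigl(\sum_j|1-z_j|,\,2\bigr)$ (valid for $|z_j|\leq 1$) with the bounds from (C) to obtain
$\psi(u,v):=c\min\bigl(T(v+1),\,2\bigr)\phi([u/2])$, which is non-increasing in $u$, non-decreasing and sub-additive in $v$ with $A_\psi$ depending only on $T$. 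Thus, with the role of $\phi$ in Proposition \ref{pr:main} played by $m\mapsto c\phi([m/2])$ and that of $\psi$ by the function just exhibited, the hypotheses of Proposition \ref{pr:main} are satisfied.

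Applying Proposition \ref{pr:main} then produces a bound of the form $Kn\bigl(\sum_i\phi([i/2])\bigr)^{2\alpha-4}$ times a triple sum. The $v\geq 3$ (interlacement) contribution reduces, via the $\Delta_{n,3}$ calculation in the proof of Proposition \ref{pr:main}, to
$\sum_{j_2\leq j_3}(j_2+1)\phi([j_2/2])\phi([j_3/2])$, bounded by $c\sum_j j\phi([j/2])\sum_{k=j}^{2n}\phi([k/2])$; extending the inner sum from $k\leq n$ to $k\leq 2n$ only loosens the estimate. For the $v=2$ contribution $\sum_{i,j,k}\phi([j/2])\psi(i+k,j)$, one changes variables $m=i+k$ and uses the $\min$-interpolation in $\psi$ so that the result also fits into the same target expression.

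The main obstacle is this last rearrangement. The linear-in-$v$ bound $\psi\leq cT(v+1)\phi([u/2])$ alone is too loose when $\phi$ decays slowly (as in $\phi(m)\sim 1/m$), while the trivial bound $\psi\leq 2c\phi([u/2])$ discards the improvement available for small $v$. Only the $\min$-interpolation, combined with absorbing the $(i,k)$-sum into the extended $k$-range up to $2n$, produces the tight form $\Delta_n(\alpha,\phi)$ stated in the proposition.
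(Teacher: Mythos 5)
Your reduction to Proposition~\ref{pr:main} and your Fourier-inversion check of hypothesis~(A) match the paper. The gap is in your verification of~(B). The $\psi$ you build, $\psi(u,v)\approx c\min\bigl(T(v+1),2\bigr)\phi(u)$ (and note your derivation actually gives $\phi(u)$, not $\phi([u/2])$, though this distinction is immaterial since $\phi(u)\le\phi([u/2])$), carries \emph{no decay in $u$ beyond $\phi(u)$}, whereas the paper's $\psi(u,v)=\phi([u/2])\min(u,v)/u$ carries an extra factor $1/u$. That factor is the entire content of the proposition, and it comes from the telescoping observation — the key lemma of this proof — which your argument does not contain:
\begin{equation*}
\phi(m)\ \ge\ \int_\Gamma f(t)^m\bigl(1-f(t)^m\bigr)\,\rd t\ =\ \sum_{j=0}^{m-1}\int_\Gamma f(t)^{m+j}\bigl(1-f(t)\bigr)\,\rd t\ \ge\ m\int_\Gamma f(t)^{2m}\bigl(1-f(t)\bigr)\,\rd t,
\end{equation*}
so that $\int_\Gamma f(t)^m(1-f(t))\,\rd t\le\phi([m/2])/m$. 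This uses $0\le f\le 1$ to exploit that $1-f(t)$ is small exactly where $f(t)^m$ concentrates; the elementary bound $|1-\prod_j z_j|\le\min(\sum_j|1-z_j|,2)$ you use throws this cancellation away.

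Concretely, take $\phi(m)\sim 1/m$ (the critical $d=2$ case). The paper's $\psi$ gives $\sum_{i,j,k}\phi(j)\psi(i+k,j)=O(n)$ and hence $\var L_n(\alpha)=O\bigl(n^2(\log n)^{2\alpha-4}\bigr)$ as claimed. With your $\psi$ one gets
\begin{equation*}
\sum_{i,j,k}\phi(j)\psi(i+k,j)\ \asymp\ \Bigl(\sum_j \tfrac{1}{j}\min(T(j+1),2)\Bigr)\Bigl(\sum_m (m+1)\phi(m)\Bigr)\ \asymp\ (\log n)\cdot n,
\end{equation*}
an extra factor of $\log n$, yielding $n^2(\log n)^{2\alpha-3}$. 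The ``$\min$-interpolation'' you flag at the end as the rescue does improve on the linear-in-$v$ bound (which gives $n^2$), but it does not beat the trivial constant bound — both give $n\log n$ — so it cannot close the gap. The missing piece is the $u$-dependence, not the $v$-dependence.

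One further remark: the hypothesis $|1-f_i(t)|\le Tf(t)$ as printed is almost certainly a typo for $|1-f_i(t)|\le T\bigl(1-f(t)\bigr)$ (as written it is essentially vacuous when $f$ is bounded below on $\Gamma$, and only the latter feeds into the telescoping step, which needs $\int_\Gamma f^u\,|1-f_b|\,\rd t\le T\int_\Gamma f^u(1-f)\,\rd t$). You read the statement literally, which is understandable, but even under the intended hypothesis your argument still lacks the telescoping step and hence the $1/u$ factor.
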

\begin{proof}[Proof  of Proposition~\ref{pr:add1}]
Using the notation of Proposition~\ref{pr:main}, for positive integers $a,u, b,v$, with $a+u\leq b$, $\epsilon_j=\pm 1$ and any $x\in \mathbb{Z}^d$ 
\begin{align*}
\rP \Big( S_{a,u}+ (\epsilon,S_{b,v}) = x \Big) 
&\leq \frac{1}{(2\pi)^d} \int_\Gamma  \prod |f_j(\ep_j t)| \rd t 
\leq \frac{1}{(2\pi)^d} \int_\Gamma  f(t)^{u+v} \rd t
\leq \frac{1}{(2\pi)^d} \phi(u+v)
\end{align*}
To find $\psi(u,v)$, notice that since $f(t) \geq 0$, 
\begin{align*}
\phi(m)
&\geq \int_\Gamma f(t)^m \big[ 1-f(t)^m\big]\rd t 
=\sum_{j=0}^{m-1}\int_\Gamma f(t)^{m+j}(1-f(t))\rd t \geq m\int_\Gamma f(t)^{2m}(1-f(t))\rd t=:Q(2m)
\end{align*}
whence $Q(m)\leq\phi([m/2])/m$, where $[\cdot]$ denotes integer part. 
Therefore, 
\begin{align*}
\rP( S_{a,u} = 0) - \rP( S_{a,u}+ S_{b,1} = 0) 
&\leq CT\int_\Gamma f(t)^{u}(1-f(t)) dt \leq C T \phi([u/2])/u,
\end{align*}
and  it easily follows that (\ref{eq:B}) is satisfied with
$\psi(u,v) := \phi([u/2])\min(u,v)/u$. 
Thus all conditions of Proposition \ref{pr:main} are satisfied and the result follows from direct application  of (\ref{eq:C}).
\end{proof}
The following Corollary, allows for the case where $\phi(m)$ is regularly varying.
\begin{corollary}\label{cor_stan2}
Assume that the conditions of Proposition~\ref{pr:add1} 
are satisfied with $\phi(m) = h(m)m^{-r}$, $r\geq 1$,  
where $h(x)$ is a slowly varying at $x\to \infty$. Then, 
$$\var(L_n(\alpha)) \leq K\Delta_n(\alpha,\phi)\leq c_{\alpha} T^{2\alpha -2} 
\begin{cases}
n^2\Big[\sum_{k=1}^ n \frac{h(k)}{k}\Big]^{2\alpha -4}, 
	&\mbox{for $r=1$},\\
n^{4-2r}h^2(n),
	&\mbox{for $1<r<3/2$},\\
n\sum_{k=1}^ n h(k)^2/k,
	&\mbox{for $r=3/2$, and}\\
n,
	&\mbox{for $r>3/2$}.	
\end{cases}$$
\end{corollary}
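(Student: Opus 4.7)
The plan is to substitute $\phi(m)=h(m)m^{-r}$ directly into the upper bound of Proposition~\ref{pr:add1} and estimate each ingredient of
\[
\Delta_n(\alpha,\phi)=n\,G_n^{2\alpha-4}\,I_n,\qquad G_n:=\sum_{i=0}^{n-1}\phi([i/2]),\quad I_n:=\sum_{j=0}^{n}j\phi([j/2])\sum_{k=j}^{2n}\phi([k/2]),
\]
by means of Karamata's theorem for regularly varying sequences. The integer parts $[\cdot/2]$ introduce only constant multiplicative factors that can be absorbed into $c_\alpha$.

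First I would estimate the head $G_n$ and the tail $\sum_{k=j}^{2n}\phi(k)$. Since $\phi$ has regular variation of index $-r$, Karamata gives $G_n\sim H(n):=\sum_{k=1}^{n}h(k)/k$ (slowly varying) for $r=1$ and $G_n=O(1)$ for $r>1$. Likewise $\sum_{k=j}^{2n}\phi(k)\leq H(2n)\lesssim H(n)$ for $r=1$, while $\sum_{k=j}^{2n}\phi(k)\asymp (r-1)^{-1}h(j)\,j^{1-r}$ for $r>1$, uniformly in $j\leq n$ by the uniform convergence theorem for slowly varying functions.

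Next I would substitute these into $I_n$. For $r>1$ one obtains $I_n\lesssim(r-1)^{-1}\sum_{j=1}^{n}h(j)^{2}j^{2-2r}$, and a further application of Karamata---whose outcome is governed by whether $2-2r$ lies above, at, or below $-1$---yields $I_n\asymp h(n)^{2}n^{3-2r}$ for $1<r<3/2$, $I_n\asymp\sum_{k=1}^{n}h(k)^{2}/k$ for $r=3/2$, and $I_n=O(1)$ for $r>3/2$. For $r=1$ I would interchange summations in
\[
I_n=\sum_{k=1}^{2n}\frac{h(k)}{k}\sum_{j=1}^{\min(k,n)}h(j)
\]
and apply $\sum_{j=1}^{m}h(j)\sim m\,h(m)$ to see that $I_n$ is of order $n$ up to slowly varying factors, which are absorbed using the generic comparison $h(n)\lesssim H(n)$. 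Multiplying by $n\,G_n^{2\alpha-4}$ then reproduces the four claimed bounds.

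The main obstacle will be the boundary case $r=1$: both $G_n$ and $I_n$ carry slowly varying factors that must be organised carefully (via $h(n)\lesssim H(n)$) to obtain exactly the power $H(n)^{2\alpha-4}$ rather than an extraneous $H(n)^{2\alpha-2}$. The other critical value $r=3/2$ also demands attention, since the sum $\sum_{k=1}^{n}h(k)^{2}/k$ must be retained in closed form rather than simplified. All remaining estimates are direct textbook applications of Karamata's theorem.
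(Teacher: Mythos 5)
Your approach---substituting $\phi$ into $\Delta_n$, estimating the factors via Karamata's theorem, and splitting on $r$---is the natural one and is correct for $r>1$, where $G_n=O(1)$ and the inner sum is a direct Karamata computation. For $r=1$, however, the gap you flag as an ``obstacle'' is genuine and is not resolved by the device you propose.

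After interchanging the sums you correctly arrive at
\[
I_n=\sum_{k=1}^{2n}\frac{h(k)}{k}\sum_{j=1}^{\min(k,n)}h(j)
\asymp\sum_{k\le n}h(k)^2 + n\,h(n)\sum_{n<k\le 2n}\frac{h(k)}{k}\asymp n\,h(n)^2,
\]
using $\sum_{j\le m}h(j)\sim m\,h(m)$ and the slow variation of $h^2$. Multiplying by $n\,G_n^{2\alpha-4}\asymp n\,H(n)^{2\alpha-4}$ gives $\Delta_n\asymp n^2\,h(n)^2\,H(n)^{2\alpha-4}$, not the stated $n^2 H(n)^{2\alpha-4}$. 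The replacement $h(n)\lesssim H(n)$ that you propose only makes the exponent worse, producing $H(n)^{2\alpha-2}$; no reorganisation of the sums removes the $h(n)^2$ factor. For $\alpha=2$ the stated bound is $O(n^2)$, while your own computation gives $\Delta_n\asymp n^2 h(n)^2$; these are incompatible whenever $h\to\infty$, e.g.\ $h(n)=\log\log n$.

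The resolution is to observe that the Corollary implicitly requires $h$ bounded, which is exactly how it is used in the paper: in the proof of Theorem~\ref{thm:nasc}, $h$ is the slowly varying function produced by Lemma~\ref{lm:extra}, which tends to $0$. Under $\sup_m h(m)<\infty$ the factor $h(n)^2$ is $O(1)$ and the stated bound follows. Your write-up should either state this hypothesis explicitly, or record the sharper bound $n^2 h(n)^2\big[\sum_{k\le n}h(k)/k\big]^{2\alpha-4}$ at $r=1$---which is what your computation actually delivers, and which also reduces correctly to Corollary~\ref{cor_stan1} when $h\equiv T$, whereas the stated form yields $T^{4\alpha-6}$ rather than $T^{2\alpha-2}$ in that specialisation.
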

Again,  the cited relevant results treated so far are not only obtained as a special case but also 
extended to dependent variables such as a random walk on a hidden Markov chain. In addition, following 
Kesten and Spitzer~\cite{Kest79} we can mimic the behaviour of genuinely $d$-dimensional random walk by constructing a one dimensional symmetric random walk with  characteristic function $f(t)=1-c|t|^{1/r}+o(|t|^{1/r})$ with $r=2/d$ for $d=2,3$ and $r=1/2$ for $d\geq 4$.

The following example of genuinely $2$-dimensional recurrent walk with infinite variance was motivated by Spitzer~\cite[pp. 87]{Spitzer76}.
\begin{example}\label{cor}
Let $S_n = \sum_{i=1}^n X_i$ be a random walk in $\mathbb{Z}^2$, such that
$\rP(|X|=k) = c/(k^3 \log(k)^\gamma)$, for $k\geq 4$ and $\gamma\in[0,1)$.
Then we have $\var(L_n(\alpha)) \leq cn^2 \max\{[\log n]^\gamma, \log \log n\}^{2\alpha-4}\log n^{-2(1-\gamma)}$, for $n\geq 10$.
Under these assumptions we have $\rP(S_n = 0)\leq c/n \log(n)^{1-\gamma}$, which is in the {\sl critical range}, where the random walk is recurrent, without second moment. 
To show it, we notice that by lengthy straightforward calculation the characteristic function of $X$  satisfies
(\ref{eq:C}) with 
\begin{align*}
\phi(n)&= \frac{c}{n \log(\re \vee n)^{1-\gamma}}\;,\; f(t)= \exp[- A |t|^2 h(|t|^2) ],\ \mbox{ where }\quad
h(r) := \big[ 1+ \log(1/r)_{+}\big]^{1-\gamma},
\end{align*}
and the sequence $\phi(m)$ is identified via  Fourier inversion, polar coordinates and a Laplace argument
\begin{equation*}
\int_{\Gamma} f(t)^n dt \leq c\int_0^1 \exp\big[-nr\big(1+\log(1/r) \big)^{1-\gamma} \big]  
+ O(\re^{-n}) \leq \frac{c}{n \log(\re \vee n)^{1-\gamma}} =:\phi(n).
\end{equation*}
\end{example}

\subsection{Bounds for identically distributed variables.}
\begin{proposition}[General upper bound for i.i.d.]\label{pr:add2} 
Let $X_i$ be i.i.d. $\mathbb{Z}^d$-valued random variables, 
and suppose that for all $n\in \mathbb{N}$, positive integers $a,u, b,v$, with $a+u\leq b$, and any $x\in \mathbb{Z}^d$
\begin{equation}\label{eq:iid}
\rP \Big( S_{a,u}\pm S_{b,v} = x \Big) \leq \phi(u+v),
\end{equation}
where $\phi(m)$ is a non-increasing sequence. Then,  for some constant $K=c(\alpha)$
\begin{equation*}
  \var(L_n(\alpha))
  \leq K n  \Big( \sum_{i=0}^{n-1} \phi(i) \Big)^{2\alpha -4} \sum_{j=0}^{n}j\phi(j)\sum_{k=j}^{[\alpha n]+1} \phi([k/\alpha])\;.
\end{equation*}
\end{proposition}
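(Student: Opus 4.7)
The plan is to follow the scheme of Proposition \ref{pr:main}, using (\ref{eq:iid}) in place of (\ref{eq:A}) and exploiting the i.i.d.\ hypothesis to avoid any external assumption of the form (\ref{eq:B}). The decomposition of $\var L_n(\alpha)$ as the double sum over ordered tuples $(\mathbf{k},\mathbf{l})$ with $k_1\leq l_1$, classified by the interlacement index $v=\sum_i|\delta_i|$, is identical to the one in Proposition \ref{pr:main}: the $v=1$ terms vanish by the Markov property, so only $v=2$ and $v\geq 3$ contribute.

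First, the $v\geq 3$ contribution $I_n$ is handled verbatim as in Proposition \ref{pr:main}: one drops the negative part, sums the free translation $m_0=k_1$, and invokes (\ref{eq:iid}) (which is precisely (\ref{eq:A})) together with the telescoping estimate $\Delta_{n,v}\leq G_n^{v-3}\Delta_{n,3}$ where $G_n:=\sum_{i=0}^{n-1}\phi(i)$. This yields the first factor $nG_n^{2\alpha-4}\Delta_{n,3}$ of the stated bound. For the $v=2$ contribution $J_n$, the $\alpha$ indices of $\mathbf{l}$ all lie in a single $k$-gap; after translating the free index, the relevant spacings are $m_0,\ldots,m_\alpha$, with $m_0,m_\alpha$ the two edge pieces belonging to the $k$-sequence and $m_1,\ldots,m_{\alpha-1}$ the middle gaps between consecutive $l$'s. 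The i.i.d.\ hypothesis causes the joint and marginal probabilities to factor cleanly and, writing $p_n:=\rP(S_n=0)$, the covariance reduces to
\[
\prod_{i=1}^{\alpha-1}p_{m_i}\,[p_a-p_M]\,,\qquad a:=m_0+m_\alpha,\ \ M:=\sum_{t=0}^\alpha m_t,
\]
multiplied by a product of return probabilities over the remaining $\alpha-2$ non-special $k$-gaps, which is handled by a further factor of $G_n^{\alpha-2}$.

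The key quantity is therefore $|p_a-p_M|$, with $b:=M-a=m_1+\cdots+m_{\alpha-1}$. On the Fourier side, writing $f(t):=\rE e^{\ri t\cdot X_1}$, one has $p_a-p_M=(2\pi)^{-d}\int_\Gamma f(t)^a(1-f(t)^b)\,dt$; the inequality $|1-f^b|\leq b|1-f|$ reduces the problem to bounding $\int|f|^a|1-f|\,dt$. Condition (\ref{eq:iid}) applied to the symmetrized pair, $\rP(S_{a,u}-S_{b,u}=0)=(2\pi)^{-d}\int|f|^{2u+2}\,dt\leq\phi(2u)$, yields $\int_\Gamma|f|^m\,dt\leq C\phi([m/2])$, and a telescoping argument in the spirit of Proposition \ref{pr:add1} should then produce an estimate of the form
\[
|p_a-p_M|\leq C\,\phi([a/\alpha])\,\frac{\min(b,a)}{a},
\]
where the factor $\alpha$ inside $\phi([a/\alpha])$ arises because $M$ decomposes into $\alpha+1$ independent chunks and the telescope must be distributed among them. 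Plugging this into $J_n$ and summing, one distinguishes a single middle $l$-gap $m_{i^*}=j$ via sub-additivity (giving the factor $j\phi(j)$), collects the other middle gaps and the non-special $k$-gaps into the factor $G_n^{2\alpha-4}$ (using $p_m\leq\phi(m-2)$ from (\ref{eq:iid}) and dominating by $G_n$), while the summation over the total combined length $k$, with range from $j$ to at most $\alpha n$, produces the tail $\sum_{k\geq j}\phi([k/\alpha])$. Combining these factors delivers the stated bound $KnG_n^{2\alpha-4}\sum_{j}j\phi(j)\sum_{k\geq j}\phi([k/\alpha])$.

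The hard part is the Fourier estimate for $|p_a-p_M|$ in the genuinely i.i.d., non-symmetric setting: the positivity $f\geq 0$ that drives the clean identity $\int f^m(1-f^m)\,dt\leq\phi(m)$ in Proposition \ref{pr:add1} is no longer available, so one must route the argument through $|f|$ and the symmetrized walk while still capturing both the $\min(b,a)/a$ gain in $b$ and the correct $[\cdot/\alpha]$ argument of $\phi$. Verifying that the various shift-by-constant issues in the precise definitions of $a$, $M$ and the number of i.i.d.\ terms represented by $p_n$ are absorbed into the constant $K=c(\alpha)$, and that singling out the distinguished middle gap $m_{i^*}$ does not over-count, are routine but require care.
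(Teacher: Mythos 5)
Your overall structure is right: the $v\geq 3$ bound from Proposition~\ref{pr:main} carries over verbatim since it uses only (\ref{eq:A}), i.e.\ (\ref{eq:iid}), so the whole issue is to rebound the $J_n$ term without assumption (\ref{eq:B}). But the route you take for $J_n$ has a genuine gap, which you yourself flag as ``the hard part.'' You want a pointwise Fourier estimate $|p_a-p_M|\leq C\,\phi([a/\alpha])\min(b,a)/a$, obtained in the style of Proposition~\ref{pr:add1}. That proposition's derivation hinges on $f(t)\geq 0$, which legitimises the telescope $\int f^m(1-f^m)\leq\phi(m)$ and hence $\int f^{2m}(1-f)\leq\phi(m)/m$. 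For a non-symmetrised i.i.d.\ walk $f$ is complex; routing through $|f|^2$ controls $\int|f|^m(1-|f|)\,\rd t$, but your chain $p_a-p_M=(2\pi)^{-d}\int f^a(1-f^b)\,\rd t$ and $|1-f^b|\leq b|1-f|$ then needs $\int|f|^a|1-f|\,\rd t$, and $|1-f|$ is not comparable to $1-|f|$ in general (consider $f$ near $-1$). So the posited inequality is not proved, and as written the argument does not close.

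The paper's proof of this proposition is entirely elementary and bypasses any Fourier estimate for $|p_a-p_M|$. Starting from (\ref{c(n)term}), one first sums the two edge gaps $m_k,m_{k+\alpha}$ over $m:=m_k+m_{k+\alpha}$, which produces the weight $(m+1)$ and leaves, for fixed inner gaps summing to $q:=m_{k+1}+\cdots+m_{k+\alpha-1}$, the quantity $\sum_{m=0}^{M}(m+1)\big[\rP(S_m=0)-\rP(S_{m+q}=0)\big]$ with $M\leq n-q$. A shift of index and dropping of the nonpositive tail bound this by $\sum_{m=0}^{q-1}(m+1)\rP(S_m=0)+q\sum_{m=q}^{n-q}\rP(S_m=0)$. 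Writing $m^{\ast}:=\max\{m_{k+1},\dots,m_{k+\alpha-1}\}$ and using $m^{\ast}\leq q\leq\alpha m^{\ast}$ together with $\rP(S_m=0)\leq\phi(m)$ and monotonicity of $\phi$, the remaining $2\alpha-4$ indices contribute $G_n^{2\alpha-4}$, and summing over $m^{\ast}=j$ and interchanging the order of summation gives exactly the stated $\sum_{j}j\phi(j)\sum_{k\geq j}\phi([k/\alpha])$. In particular the $\alpha$ inside $\phi([k/\alpha])$ comes from the combinatorial bound $q\leq\alpha m^{\ast}$, not from any Fourier telescope. Replace your Fourier step with this summation-by-parts argument, which needs nothing beyond (\ref{eq:iid}) and monotonicity of $\phi$.
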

\begin{proof}[Proof  of Proposition~\ref{pr:add2}]
By inspecting the proof of  Proposition~\ref{pr:add1}, we notice that only need to bound the $J_n$ term.
Consider a typical ordering 
$$0\leq i_1 \leq \cdots \leq i_k \leq j_1 \leq \cdots \leq j_{\alpha} \leq i_{k+1} \leq \cdots \leq i_{\alpha} \leq n.$$
let us change variables to $(m_0,\dots, m_{2\alpha})$ such that $m_0+\cdots + m_{2\alpha}=n$. Then 
the contribution from this case to $J_n$ is
\begin{align}\label{c(n)term}
\sum_{m_0, \dots, m_{2\alpha}}
  \prod_{\substack{j\neq k, k+\alpha\\1\leq j \leq 2\alpha -1}}
  \rP(S_{m_j}=0) \Big[
    \rP(S_{m_k + m_{k+\alpha}} = 0) - \rP(S_{m_k + \cdots + m_{k+\alpha}}=0)
  \Big].
\end{align}
For $j\neq \alpha, k+\alpha$ keep $m_j$ fixed and sum over $m=m_k+m_{k+\alpha}$, from $0$ to 
$M$ which depends on $n$, and the $m_j$ for $j\neq k, k+\alpha$.
Then for given $m_{k+1}, \dots, m_{k+\alpha -1}$, the term in the sum is 
$$
\sum_{m=0}^M (m+1) [\rP(S_m=0)-\rP(S_{m+q}=0)],$$
where $q:=m_{k+1}+\ldots+m_{k+\alpha-1}$.
Then since $M\leq n-q$, it is an easy exercise to show that this sum is bounded above by
\begin{align*}
\lefteqn{\sum_{m=0}^{M} (m+1) \Big[\rP(S_m=0)-\rP(S_{m+q}=0)\Big]}\\
&\leq 
\sum_{m=0}^{q-1} (m+1)\rP(S_m=0) +q \mathbf{I}(n-q\geq q)\sum_{m=q}^{n-q} \rP(S_{m}=0)\\
&\leq \sum_{m=0}^{(\alpha m^{\ast})\wedge n} (m+1) \rP(S_m=0) +\alpha m^{\ast} \sum_{m=m^{\ast}}^{n} P(S_{m}=0)
\end{align*}
where $m^{\ast}:=\max\{m_{k+1},\ldots,m_{k+\alpha-1}\}$. 
%
The result follows by summing over all indices apart from $m^\ast$ and changing the order of summation.
\end{proof}

\subsection{Proofs of main results}

\begin{proof}[Proof of Theorem~\ref{thm:dbound}.]
We apply a comparison argument found to be useful in many areas (e.g. Pruss and Montgomery-Smith~\cite{Mont01}, and Lefevre and Utev~\cite{LU03}), more exactly, we bound $\var(L_n)$  by the corresponding characteristic for the symmetrised random walk.

Following Spitzer's argument 
we notice that with $f(t)=\rE[\exp(\ri t\cdot X_1)]$
\begin{align*}
\rP \Big( S_{a,u}+\ep S_{b,v} = x \Big) &\leq c \int_{\Gamma} |f(t)|^u |f(-t)|^v \rd t
= c \int_{\Gamma} \Big[|f(t)|^2\Big]^{u/2}\Big[|f(-t)|^2\Big]^{v/2}\rd t
\end{align*}
Since $|f(t)|^2$ is a characteristic function of $d$-dimensional symmetric integer variable,  for some positive $\lambda$, 
$
1-|f(t)|^2\geq \lambda |t|^2$,
and hence, 
$$
\rP \Big( S_{a,u}+\ep S_{b,v} = x \Big) \leq c \int_{\Gamma}  
\exp\Big[-\frac{\lambda (u+v)}{2}|t|^2  \Big]\rd t
\leq c (u+v)^{-d/2}$$
and the proof follows from Proposition \ref{pr:add2} applied with $\phi(m)= m^{-d/2}$.
\end{proof}
The proof of Theorem~\ref{thm:nasc} will be based on the following Lemma.
\begin{lemma}\label{lm:extra}
Assume $X$ is genuinely $d$-dimensional and $\rE |X|^2=\infty$. Then there exists a monotone slowly varying function $h_n\to 0$ as $n\to \infty$ such that 
\bb
\sup_{x\in\mathbb{Z}^d} \rP(S_n=x) \leq c_d \int_\Gamma |\rE e^{it\cdot X}|^n d t\leq h_n n^{-d/2} \ee
\end{lemma}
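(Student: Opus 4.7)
The plan is in three stages: obtain the first inequality from Fourier inversion on the torus, establish $n^{d/2}\int_\Gamma|f(t)|^n \rd t\to 0$ via a rescaling argument driven by the infinite second moment, and finally extract a monotone slowly varying envelope $h_n\to 0$ of this sequence.

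The first inequality is immediate: $\rP(S_n=x)=(2\pi)^{-d}\int_\Gamma e^{-\ri t\cdot x}f(t)^n\rd t$ gives $\sup_x\rP(S_n=x)\leq c_d\int_\Gamma|f(t)|^n\rd t$ with $c_d=(2\pi)^{-d}$. For the decay, I would set $Y:=X_1-X_2$ so that $g(t):=|f(t)|^2=\rE e^{\ri t\cdot Y}$ is the characteristic function of a symmetric, genuinely $d$-dimensional lattice walk; from $|Y|^2\geq(|X_1|-|X_2|)^2$ it follows that $\rE|X|^2=\infty$ forces $\rE|Y|^2=\infty$. Writing $|f(t)|^n=g(t)^{n/2}\leq e^{-n(1-g(t))/2}$ and using, as in the proof of Theorem~\ref{thm:dbound}, that genuine $d$-dimensionality gives $1-g(t)\geq\lambda|t|^2$ on $|t|\leq\delta_0$ while $|g(t)|\leq\rho<1$ on $|t|\geq\delta_0$, the outer contribution is $O(\rho^n)=o(n^{-d/2})$, and after the rescaling $t=u/\sqrt{n}$ the inner integral becomes $n^{-d/2}\int_{|u|\leq\delta_0\sqrt{n}}g(u/\sqrt{n})^{n/2}\rd u$ with integrand uniformly dominated by the integrable Gaussian $e^{-\lambda|u|^2/2}$.

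The crux is to verify $g(u/\sqrt{n})^{n/2}\to 0$ for Lebesgue-a.e.\ $u\in\R^d$. Since $n(1-g(u/\sqrt{n}))=2n\,\rE\sin^2(u\cdot Y/(2\sqrt{n}))$, Fatou's lemma yields $\liminf_n n(1-g(u/\sqrt{n}))\geq\rE[(u\cdot Y)^2]/2$, which is $+\infty$ whenever $\rE[(u\cdot Y)^2]=\infty$. The set $V:=\{u\in\R^d:\rE[(u\cdot Y)^2]<\infty\}$ is a linear subspace of $\R^d$ by Minkowski's inequality, and it cannot contain every coordinate axis because $\rE|Y|^2=\infty$, so $V$ is a proper subspace and therefore Lebesgue-negligible. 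Dominated convergence against the Gaussian envelope then delivers $a_n:=c_d\,n^{d/2}\int_\Gamma|f(t)|^n\rd t\to 0$. To upgrade $a_n\to 0$ to a monotone slowly varying majorant, I would pick $N_k\uparrow\infty$ with $a_n\leq 1/k$ for $n\geq N_k$, spaced so that $N_k/N_{k-1}\to\infty$, and set $h_n:=1/\max\{k:N_k\leq n\}$; this $h_n$ is monotone, tends to $0$, satisfies $h_{cn}/h_n\to 1$ for every $c>0$, and dominates $a_n$.

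The main obstacle is this pointwise step, because $(1-g(t))/|t|^2$ need not tend uniformly to $\infty$ when $\rE|Y|^2=\infty$: coordinates of $Y$ that happen to have finite second moment can keep the ratio bounded along an entire proper subspace of $\R^d$. The observation that this subspace has Lebesgue measure zero is what lets dominated convergence against the Gaussian envelope inherited from Theorem~\ref{thm:dbound} close the argument.
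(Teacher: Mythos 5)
Your proof is correct, and it takes a genuinely different route from the paper's. The paper proceeds by a level-set argument: after symmetrizing, it partitions the sphere $S^{d-1}$ into directions $A_{L,K}$ where the truncated second moment $\sigma_{e,L}=\rE[(e\cdot X)^2\mathbf{I}(|X|\leq L)]$ is large and a small-measure complement $\bar A_{L,K}$, combines these to show $\lambda_d\{t\in\Gamma:1-f(t)\leq x\}=o(x^{d/2})$ as $x\downarrow 0$, and then converts this to decay of $\int_\Gamma f(t)^n\rd t$ via the Laplace identity $\int_\Gamma f^n\rd t=n\int_0^\infty e^{-nx}F(x)\rd x$ with $F$ the distribution function of $\log(1/f)$. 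You instead rescale $t=u/\sqrt n$, use Fatou together with the observation that $V=\{u:\rE(u\cdot Y)^2<\infty\}$ is a proper (hence null) linear subspace to get $g(u/\sqrt n)^{n/2}\to 0$ for a.e.\ $u$, and close with dominated convergence against the Gaussian envelope $e^{-\lambda|u|^2/2}$. Both arguments hinge on the same structural fact — infinite second moment forces infinite directional second moment in Lebesgue-a.e.\ direction — but your rescaling/DCT route avoids the paper's somewhat technical $\epsilon$--$K$--$L$ bookkeeping and is arguably cleaner; the paper's route yields slightly more explicit control of the level sets. Your construction of the monotone slowly varying majorant (spacing $N_k$ with $N_k/N_{k-1}\to\infty$) is fine and parallel to the paper's recursive construction of $a_n$.

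One small imprecision: the deduction that $\rE|X|^2=\infty$ forces $\rE|Y|^2=\infty$ does not follow immediately from $|Y|^2\geq(|X_1|-|X_2|)^2$ when $\rE|X|=\infty$, since expanding the square then gives $\infty-\infty$. A cleaner way is to note $|X_1|^2\leq 2|X_1-X_2|^2+2|X_2|^2$, so conditioning on $X_2=a$ with $\rP(X_2=a)>0$ gives $\rE|X_1-a|^2=\infty$ and hence $\rE|Y|^2=\infty$. This does not affect the correctness of the rest of the argument.
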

\begin{proof}[Proof of lemma~\ref{lm:extra}.]
Without loss of generality assume that $X$ is symmetrized. Let 
$\sigma_{e,L}:=\rE \big[(e\cdot X)^2 \mathbf{I}(|X|\leq L)\big]$.
Following Spitzer, since $X$ is genuinely $d$-dimensional, we may assume that there exist positive constants $c$ and $W$ such that for any unit  vector $|e|=1$,\
$\sigma_{e,W}\geq c$ and $1-f(t)\geq c |t|^2$.
Let $\lambda_d$ be the $d$-dimensional Lebesgue measure on $\mathbb{R}^d$, and $\mu_d$ the Lebesgue-Haar  measure on $S^{d-1}:=\{e\in [-\pi,\pi]^d: |e|=1\}$.   
Notice that since  $\rE |X|^2=\infty$, for any $K$ we have
$\mu_d\{e: \sigma_{e,\infty} <K\}=0$. 

Fix a small positive $x$ such that $\sqrt{c/x}\geq 2W$, and
for any $\epsilon>0$ let $K=K(\epsilon)=\epsilon^{-d/2}$. Then there exists $L=L(\epsilon)>0$ small enough so that
$\mu_d\{e: \sigma_{e,L} <K\}\leq \ep^{d/2}$.
We partition $S^{d-1}$ in two sets
\bb
A_{L,K}=\{e\in S^{d-1}:\sigma_{e,L}\geq K\}
\quad\mbox{and}\quad \bar{A}_{L,K}=\{e\in S^{d-1}:\sigma_{e,L}<K\}\;,\ee
so that for any direction $e\in \bar{A}_{L,K}$,
\bb
\{z\in \mathbb{R}: 1-f(ze)\leq x\}\subseteq \{z:c z^2\leq x\}\subseteq \{z: |z|\leq \sqrt{x/c}\}\;.\ee
Hence, using  $d$-dimensional spherical coordinates, 
\begin{align*}
\lambda_d\{(z,e) \in \mathbb{R}\times\bar{A}_{L,K}: 1-f(ez)\leq x\}
&\leq \mu_d\{\bar{A}_{L,K}\}(x/c)^{d/2} (1/d)
\leq \ep^{d/2}(x/c)^{d/2} (1/d)\;.
\end{align*}
On the other hand, for any $t$, 
\begin{align*}
1-f(t)
&=2\sum_{k\in Z^d} \sin([t\cdot k]/2)^2 P(X=k)\geq (1/4) E[(t\cdot X)^2I(|t\cdot X|\leq 1/2)]=(|t|^2/4) \sigma_{t/|t|, 1/2|t|}.
\end{align*}
Now, assume that $\sqrt{c/x}\geq 2L\;$.
Then for any direction $e \in A_{L,K}$, by choice of $x$ and since $\sigma_{e,L}$ is increasing in $L$, for $cz^2\leq 1-f(ez)\leq x$ or $|z|\leq \sqrt{x/c}$, it must be the case that
\begin{equation*}
x\geq 1-f(ez) \geq (z^2/4) \sigma_{e, 1/2z}
\geq (z^2/4) \sigma_{e, L}
\geq (z^2/4) K
\end{equation*}
implying that on the set  $A_{L,K}$, it must be that $|z|\leq 2\sqrt{x/K}$.
Changing to  $d$-dimensional polar coordinates, we find that
\begin{align*}
\lambda_d\Big\{(z,e)\in \mathbb{R}\times A_{L,K} : 1-f(ez)\leq x\Big\}
&\leq \int_{A_{L,K}} \int_0^{ \sqrt{4x/K}} r^{d-1}\rd r \rd e
\leq C_d \epsilon^{d/2} x^{d/2}
\;.
\end{align*}
Overall, for $x\leq c/4L^2$, $\lambda_d\{t: 1-f(t)\leq x\}\leq c_d (x\ep)^{d/2}$, and hence
$\{t\in \Gamma: 1-f(t)\leq x\}$
has Lebesgue measure $o(x^{d/2})$.

Let $F(x)$ be the cumulative distribution function of $\log (1/f(t))$ on the probability space 
$\Gamma$ with normalised  Lebesgue measure. Then $F$ is continuous at $x=0$ and supported on $\mathbb{R}^+$. Moreover, as $ 0<x\to 0$,
$F(x) =o(x^{d/2})$. 
Therefore, for some positive sequence $\epsilon_n\to 0$

$$
\frac{1}{[2\pi]^2} \int_\Gamma f(t)^n \rd t = \int_0^\infty e^{-nx} \rd F(x)
=n\int_0^\infty \re^{-nx} F(x)\rd x 
\leq n^{-d/2} \epsilon_n. $$

It remains to show that there exists a  positive monotone slowly varying function $\ep_n\leq h(n)\to 0$  as $n\to \infty$.
Let $\delta_n=\sup_{j\geq n}\ep_j$, $a_0:=0$ and for $n\geq 1$ define $a_n$ recursively by
$a_n=\min(2a_{2^{r-1}}, 1/\delta_n)$, for $2^{r-1}< n\leq 2^r$, so that $a_n\to \infty $ is monotone, $a_{2^{r}}\leq 2a_{2^{r-1}}$ implying that
 $a_{2n}\leq 4a_{n}$, and $1/a_n\geq \delta_n\geq \ep_n$. Finally, take $h_n:= 1/\max(a_0, \log a_n)$.
\end{proof}
\begin{proof}[Proof of Theorem~\ref{thm:nasc}.]
Assume that $\rE|X|^2=\infty$ and $d=2$ or $d=3$. Then, by Lemma~\ref{lm:extra} there exists a slowly varying function $h(n)\to 0$ as $n\to \infty$ such that 
$\int_\Gamma |\rE\exp(\ri t\cdot X )|^n \rd t\leq h_n n^{-d/2}$.
Applying Corollary~\ref{cor_stan2} with $r=1$ and $r=3/2$ we respectively find that
$$
\var(L_n(\alpha)) \leq 
\begin{cases}
K n^2\Big(\sum_{k=1}^ n h(k)/k\Big)^{2\alpha -4} =o(n^2 (\log n)^{2\alpha-4}),&\mbox{for $d=2$, and}\\
K n\Big(\sum_{k=1}^ n h(k)^2/k\Big) = o(n\ln n),&\mbox{for $d=3$}.
\end{cases}$$

Finally assume that $\rE |X|^2 < \infty$ and $E[X]=\mu\neq 0$. Then
$\rP(S_n=0) = \rP(S'_n=-n\mu)$ whence it follows that $\rP(S_n=0)=o(n^{-d/2})$ (see for example \cite[Theorem~2.3.10]{Law10}). Then inspecting the proof of Proposition 4, one can readily obtain the desired bound for the $J_n$ term, while with slight modification the bound for the $I_n$ term also follows.

Note that for $d=1$ the situation is much simpler since then $\var(L_n(\alpha|\epsilon, d)) \sim C [\rE L_n(\alpha|\epsilon, d)]^2$ and if 
$\rE|X|^2=\infty$ or $\rE[X] \neq 0$, $\rE L_n(\alpha|\epsilon, d) =o(n^{(1+\alpha)/2})$.
\end{proof}
\begin{proof}[Proof of Theorem~\ref{thm:asymptotic}.]
We first give the proof for the case $d=1$. 
As in the proof of Proposition~\ref{pr:main} we begin from expression \eqref{eq:var}, and define the sequences $p_i$, and $\delta_i$ for $i=1, \dots, 2\alpha -1$, and the quantity $v(\delta) = \sum_{i=1}^{2\alpha -1} |\delta_i|$. 
Recall that $v(\delta)$ measures the interlacement of 
the two sequences $k_1, \dots, k_{\alpha}$, and $l_1, \dots, l_{\alpha}$. 
For example $v(\delta) = 1$ occurs when either $k_{\alpha} \leq l_1$, or $l_{\alpha} \leq k_1$, in which case the contribution vanishes by the Markov property.
On the other hand $v(\delta)=2$ when for example $l_1, \dots, l_{\alpha} \in [k_i, k_{i+1}]$ for some $i$. Finally $v(\delta) = 3$ occurs when for example
$$k_1 \leq \cdots \leq k_{r} \leq l_1 \leq \cdots \leq l_{s} \leq k_{r+1} \leq \cdots \leq k_{\alpha} \leq l_{s+1} \leq \cdots \leq l_{\alpha} \leq n.$$
From the proof of Proposition~\ref{pr:main}, and using the bound $\rP(S_n = 0) =O(1/n)$, the terms of the sum are bounded above by
$ n^2 \log(n)^{2\alpha -1 -v(\delta)}$, and thus the leading term appears when either $v(\delta) = 2, 3$, with other terms giving strictly lower order.
We shall therefore analyze these two situations in detail in order to derive the exact asymptotic constants. 
When $v = 3$, the two terms in the difference individually give the correct order and shall be treated by the classical Tauberian theory.
However for $v=2$, the two terms only give the correct order when considered together. This however forbids the use of Karamata's Tauberian theorem since the monotonicity restriction would require roughly that $X_i$ is symmetrized. Thus the complex Tauberian approach, as developed in \cite{DU11}, is required to 
justify the answer. 
\vskip3pt
\noindent{\bf \large Case 1:} $v(\delta) = 3$.
Assume that part of the sequence $\mathbf{l} = \{l_1, \dots, l_{\alpha}\}$ lies between $k_r$ and $k_{r+1}$, and the rest between $k_s$ and $k_{s+1}$. Then using the change of variables
\begin{gather*}
    i_1 = m_0, i_2= m_0+m_1, \cdots, i_r = m_0+\cdots+m_{r-1}\\
    j_1 = m_0 + \cdots+m_r, j_2 = m_0+\cdots + m_{r+1}, \cdots, j_s= m_0+ \cdots + m_{r+s-1},\\
    i_{r+1} = m_0+\cdots + m_{r+s}, i_{r+2} = m_0+ \cdots + m_{r+s+1}, \cdots, i_{\alpha} = m_0+ \cdots + m_{\alpha + s -1}\\
    j_{s+1} = m_0+ \cdots + m_{\alpha + s}, j_{s+2}=m_0+\cdots+m_{\alpha + s +1}, \cdots, j_{\alpha} = m_{2\alpha -1}, 
    n=m_0+ \cdots + m_{2\alpha}.
\end{gather*}
we rewrite the positive term in \eqref{eq:var} as
\begin{align*}
a(n)
&= \sum\rP\bigg[ S(i_1) = \cdots=S(i_{\alpha}); S(j_1) = \cdots = S(j_{\alpha})\bigg]\\
&=\sum_{m_0, \cdots, m_{2\alpha -1}} \Big[\prod_{\substack{j=1 \\ j\neq r, r+s, \alpha +s}}^{2\alpha -1}
\rP(S_{m_j}=0)\Big]
\times \rP(S_{m_r} + S'_{m_{r+s}} = S'_{m_{r+s}}+ S''_{m_{\alpha +s}}=0).
\end{align*}
Notice that from \cite{DU11} we have that $\sum_{n\geq 0} \lambda^n \rP(S_n = 0) 
\sim  \log \big(1/(1-\lambda) \big)/\pi \gamma$. Let 
$$a(\lambda) = (1-\lambda)^{-3} [-\log(1-\lambda)]^{2\alpha -4}\;, \; c_\gamma = 
(\pi \gamma)^{-2\alpha +4} .$$
Then, by direct calculations and Fourier inversion formula 
\begin{align*}
 \sum_{n\geq 0 }\lambda^n a(n) &= c_\gamma (1-\lambda) a(\lambda) 
\sum_{x\in \mathbb{Z}} \sum_{k_1, k_2, k_3\geq 0} \lambda ^{k_1+ k_2+ k_3} 
\rP(S_{k_1} = x) \rP(S_{k_2} =- x) \rP(S_{k_3} = x)\\
&=  c_\gamma (1-\lambda) a(\lambda)  \frac{1}{(2\pi)^2} 
    \iint_{[-\pi, \pi]^2}
        \frac{\rd t \rd s}{(1-\lambda f(t))(1-\lambda f(s)) (1-\lambda f(t+s))}\\
& \sim c_\gamma (1-\lambda) a(\lambda)
\frac{1}{(2\pi)^2\gamma^2} \frac{1}{1-\lambda}
    \iint_{\mathbb{R}^2}
    \frac{\rd x \rd y}{(1+|x|)(1+|y|)(1+|x+y|)}\sim (1/4\gamma^2) c_\gamma  a(\lambda)
\end{align*}
Next we consider the negative term in \eqref{eq:var}
\begin{multline*}
b(n)
:= \sum_{m_0, \dots, m_{2\alpha -1}}
\rP\Big[ S_{m_1} = \cdots = S_{m_{r-1}} = S_{m_r} + \cdots + S_{m_{r+s}} = S_{m_{r+s+1}}=\cdots = S_{m_{\alpha + s -1}}=0 \Big]\\
 \times
    \rP\Big[ S_{m_{r+1}} = \cdots = S_{m_{r+s}} + \cdots + S_{m_{\alpha +s}} = S_{m_{\alpha + s + 1}} = \cdots = S_{m_{2\alpha -1}} = 0\Big].
\end{multline*}
By direct calculations and \eqref{eq:expansion1}, 
\begin{multline*}
\sum_n \lambda^n b(n) =
 \Big(\frac{1}{\pi \gamma} \log\big( \frac{1}{1-\lambda}\big)\Big)^
 {\alpha -s+r -2}(1-\lambda)^{-2} \sum_{m_{r}, \dots, m_{\alpha +s}=0}^\infty \lambda^{m_r + \cdots + m_{\alpha +s}} \\
\times 
\prod_{\substack{t=r+1, \cdots, \alpha +s -1 \\ t\neq r+s }} \rP (S_{m_t}=0)
\rP(S_{m_{r}}+ \cdots + S_{m_{r+s}}=0) \rP(S_{m_{r+s}} + \cdots S_{m_{\alpha +s}}=0),
\end{multline*}
and using Fourier inversion and \eqref{eq:expansion1} the internal sum behaves as
\begin{align*}
&\lefteqn{(2\pi)^{-\alpha-s+r} \int_{-\pi}^{\pi}\!\!\!\!\!\cdots\!\!\int_{-\pi}^{\pi}
(1-\lambda \phi(x))^{-1}  (1-\lambda \phi(x)\phi(y))^{-1}  (1-\lambda \phi(y))^{-1}}
\\
&\qquad\qquad  \times \Big[ \prod_{j=r+1}^{r+s-1} \prod_{k=r+s+1}^{\alpha+s-1}
  (1-\lambda \phi(x)\phi(t_j))^{-1} (1-\lambda \phi(y) \phi(t_k))^{-1}
\rd t_{j} \, \rd t_k \Big] \rd x \, \rd y\\
&\sim 
(\pi\gamma)^{-\alpha-s+r}(1-\lambda)^{-1}
\log\big(\frac{1}{1-\lambda}\big)^{\alpha -r +s -2} \frac{\pi^2}{6}.
\end{align*}
Then we have $\sum_n \lambda^n b(n)\sim 
(\pi^2/6 (\pi \gamma)^{2\alpha -2}) a(\lambda)$,
whence the Tauberian theorem implies that 
$a(n)-b(n) \sim n^2 \log(n)^{2\alpha -4} /24 \pi^{2\alpha -4} \gamma^{2\alpha -2}$.
\noindent
Most importantly we see that the lengths and locations of the chains, $r$ and $s$, do not affect the asymptotic. 
Noting that if $1\leq r, s \leq \alpha -1$, we can partition $2\alpha= r + s + (\alpha -r) + (\alpha  -s)$ in $(\alpha-1)^2$ ways, and thus overall the total contribution from terms with $v=3$ is
$$[(\alpha !(\alpha-1))^2/12 \pi^{2\alpha -4} \gamma^{2\alpha -2}] n^2 \log(n)^{2\alpha -4}.$$
\vskip 3pt
\noindent{\bf Case 2:} $v(\delta) = 2$. The typical term $c(n)$ was introduced in (\ref{c(n)term}) in 
the proof of Proposition \ref{pr:add2}. 
Now we let $\lambda\in \mathbb{C}$, with $|\lambda|<1$. 
By lengthy but direct calculations we can derive an expression of the form
\begin{align*}
\sum_n \lambda^n c(n) = \frac{\alpha-1}{(\gamma \pi)^{2\alpha-2}} a(\lambda)+ o(a(\lambda)), \quad \lambda \to 1.
\end{align*}
The approach developed in \cite{DU11} can then be used to bound the error terms and show that 
\ $c(n) \sim [(\alpha-1)/2(\gamma \pi)^{2\alpha-2}] n^2 \log(n)^{2\alpha -4}.$ 

Finally taking into account the fact that the $l_1, \dots, l_\alpha$ can be in any of the $\alpha -1$ intervals $[k_i, k_{i+1}]$, for $i=1, \dots, \alpha -1$, 
the result follows the overall contribution of terms with $v(\delta) = 2$ is
$$\frac{(\alpha-1)^2}{2(\gamma \pi)^{2\alpha-2}} n^2 \log(n)^{2\alpha -4}.$$

The case for $d=2$ is very similar, so we move on to the case $d=3$. 

\noindent{\bf Case $d=3$, $\alpha =2$.}
Using the same notation as before, we have three terms to consider $a(n)$, $b(n)$, and $c(n)$. We first consider $c(n)$. Letting $K := \epsilon /\sqrt{1-\lambda}$ and using the usual power series construction and
spherical coordinates
\begin{align}\label{eq:kappa1}
\sum_{n} \lambda^n c(n)\notag
&= (1-\lambda)^{-2} (2\pi)^{-6} \iint_{J^3\times J^3}
\frac{\lambda f(y) (1-f(x)) \rd x \rd y}
{(1-\lambda f(x))^2 (1-\lambda f(y)) (1-\lambda f(x)f(y))}\notag\\
&\sim 2(2\pi)^{-4} |\Sigma|^{-1}(1-\lambda)^{-2} 
\int_0^K \int_0^K \frac{r^4 s^2 \rd r \rd s}
{(1+r^2)^2(1+s)^2(1+r^2+s^2)}\notag\\
&\sim 2(2\pi)^{-4} |\Sigma|^{-1}\frac{\pi}{2} (1-\lambda)^{-2} \log\Big( \frac{1}{1-\lambda}\Big)
=: \kappa_1 (1-\lambda)^{-2} \log\Big( \frac{1}{1-\lambda}\Big),
\end{align}
and thus $c(n) \sim \kappa_1 n \log n$, 
where $\kappa_1 >0$, where the answer can be justified following \cite{DU11}.

The term $a(n)-b(n)$ is trickier to compute. As usual we consider the power series
\begin{align*}
\sum_{n\geq 0} \lambda^n \big( a(n) - b(n)\big)
&= (1-\lambda)^{-2} (2\pi)^{-6} 
\iint_{B(\epsilon)} 
\frac{ \rd x \rd y }{(1- \lambda f(x))(1-\lambda f(y))(1-\lambda f(x+y))}
\\
&\qquad - (1-\lambda)^{-2} (2\pi)^{-6} 
\iint_{B(\epsilon)} 
\frac{ \rd x \rd y }{(1- \lambda f(x))(1-\lambda f(y))(1-\lambda f(x)f(y))}\\
&= (1-\lambda)^{-2} (2\pi)^{-6} (I_1(\lambda) - I_2(\lambda)).
\end{align*}
Let $A\in [-1,1]$ be the cosine of the angle between $x$ and $y$, which  in spherical coordinates is
\begin{equation}A = A(\theta_1,\theta_2,\phi_1,\phi_2) = \cos(\phi_1-\phi_2)\sin(\theta_1) \sin(\theta_2) + \cos(\theta_1)\cos(\theta_2).\label{eq:defA}
\end{equation}
Then as $0<\lambda \uparrow 1$, using the expansion \eqref{eq:expansion1}
\begin{align*}
I_1(\lambda)
&\sim|\Sigma|^{-1} \int_{r,s=0}^{\epsilon}
\int_{\phi_{1,2}=0}^{2\pi}
\int_{\theta_{1,2}=0}^{\pi}
\frac{r^2 s^2 \sin(\theta_1)\sin(\theta_2) \rd r \rd s \rd \theta_1 \rd \theta_2 \rd \phi_1 \rd \phi_2}
{(1-\lambda + \lambda r^2)(1-\lambda + \lambda s^2)
\Big[1-\lambda + \lambda \big( r^2 + s^2 + 2 Ars\big)\Big]}\\
&= |\Sigma|^{-1} 
\int_{\mathbf{\phi}, \mathbf{\theta}}\sin(\theta_1)\sin(\theta_2) 
\int_{r=0}^K \int_{s=0}^K
\frac{r^2 s^2 \rd r \rd s }
{(1+ r^2)(1+ s^2)
\Big[1+ r^2 + s^2 + 2 Ars\Big]}
\rd \mathbf{\theta} \rd \mathbf{\phi}\\
&\sim 
|\Sigma|^{-1} \log(K)
\int_{\mathbf{\phi}, \mathbf{\theta}}
\sin(\theta_1)\sin(\theta_2) 
\frac{ \arccos(A(\mathbf{\theta}, \mathbf{\phi})) }
{\sqrt{1-A(\mathbf{\theta}, \mathbf{\phi})^2}} .
\end{align*}
The other integral is slightly easier
\begin{align*}
I_2(\lambda)
&\sim |\Sigma|^{-1}\frac{\pi}{2} \log{K} \int_{\theta, \phi} \sin(\theta_1)\sin(\theta_2) \rd\theta_1\rd\theta_2 \rd \phi_1 \rd \phi_2,
\end{align*}
and thus overall we must have that 
\begin{align}
(I_1- I_2)(\lambda)
&\sim 
\frac{1}{2}(2\pi)^{-6}|\Sigma|^{-1} 
(1-\lambda)^{-2} \log\Big( \frac{1}{1-\lambda} \Big)\notag\\
&\quad \times\int_{\theta_1, \theta_2=0}^{\pi} \int_{\phi_1,\phi_2=0}^{2\pi} 
\Big[ \frac{\arccos(A)}{\sqrt{1-A^2}} - \frac{\pi}{2} \Big] \sin(\theta_1)\sin(\theta_2) \rd \theta_{1,2} \rd \phi_{1,2} \notag\\
&=: \kappa_2 (1-\lambda)^{-2} \log\Big( \frac{1}{1-\lambda} \Big),\label{eq:kappa2}
\end{align}
whence it follows that $\var(L_n(2)) \sim (\kappa_1 + \kappa_2) n \log n$.

To prove the last claim, let $S'_n = X'_1 + \cdots + X'_n$ be another random walk, independent of $S_n$, such that its characteristic function $f'(t)= \rE[\exp(\ri t X'_i)]$ also satisfies the expansion \eqref{eq:expansion1}. Then using \cite[Lemma~3.1]{DU11} one can adapt the proof of \cite[Theorem~2.1]{DU11} to show that 
$L'_n(\alpha) = L_n(\alpha) + o (L_n(\alpha))$.
\end{proof}


\begin{thebibliography}{00}
\bibitem{Aaronson} J.~Aaronson.
\newblock Relative complexity of random walks in random sceneries. 
\newblock \emph{Ann. Probab.} 40, 2012, no. 6, pp. 2460-2482.
%
%

\bibitem{Asselah}
A.~Asselah. 
\newblock Shape transition under excess self-intersections for transient random walk.
\newblock \emph{Annales de l'institut Henri Poincare (B) Probability and
  Statistics}, 
46, no. 1, pp. 1250-278, 2010.

\bibitem{Beck09}
M.~Becker and W.~K{\"o}nig.
 Moments and distribution of the local times of a transient random
  walk on {$\mathbb{Z}^d$}.
 \emph{J. Theoret. Probab.}, 22 (2): 365--374, 2009.

\bibitem{Bolt89}
E.~Bolthausen.
 A central limit theorem for two-dimensional random walks in random
  sceneries.
 \emph{Ann. Probab.}, 17 (1): 108--115, 1989.

\bibitem{Bor79}
A.~N. Borodin.
 A limit theorem for sums of independent random variables defined on a
  recurrent random walk.
 \emph{Dokl. Akad. Nauk SSSR}, 246 (4): 786--787,
  1979.


\bibitem{Bry95}
D.~C. Brydges and G.~Slade.
 The diffusive phase of a model of self-interacting walks.
 \emph{Probab. Theory Related Fields}, 103 (3):
  285--315, 1995.

\bibitem{CGP}
F.~Castell, N.~Guillotin-Plantard, and F.~P\`ene.
 {Limit theorems for one and two-dimensional random walks in random
  scenery.}
 \emph{Annales de l'institut Henri Poincare (B) Probability and
  Statistics}, 2012.

\bibitem{Chen10}
X.~Chen.
 \emph{Random walk intersections}, volume 157 of \emph{Mathematical
  Surveys and Monographs}.
 American Mathematical Society, Providence, RI, 2010.
 Large deviations and related topics.

\bibitem{DU10}
G.~Deligiannidis and S.~Utev.
 An asymptotic variance of the self-intersections of random walks,
  2010,  arXiv:1004.4845.

\bibitem{DU11}
G.~Deligiannidis and S.~Utev.
 Computation of the asymptotics of the variance of the number of
  self-intersections of stable random walks using Wiener-Darboux theory.
 \emph{Sib. Math. J}, 52, 2011.

%
\bibitem{DZ15}
G.~Deligiannidis and K.~Zemer. 
\newblock Relative complexity of random walks in random scenery in the absence of a weak invariance principle for the local times.
\newblock preprint, 2015.


\bibitem{GS09}
J{\"u}rgen G{\"a}rtner and Rongfeng Sun.
 A quenched limit theorem for the local time of random walks on {$\Bbb
  Z^2$}.
 \emph{Stochastic Process. Appl.}, 119 (4):
  1198--1215, 2009.


\bibitem{Kest79}
H.~Kesten and F.~Spitzer.
 A limit theorem related to a new class of self-similar processes.
 \emph{Z. Wahrsch. verw. Gebiete}, 50: 5--25, 1979.

\bibitem{Law91}
G.F. Lawler.
 \emph{{Intersections of Random Walks}}.
 Birkhauser, 1991.

\bibitem{Law10}
Gregory~F. Lawler and Vlada Limic.
 \emph{Random walk: a modern introduction}, volume 123 of
  \emph{Cambridge Studies in Advanced Mathematics}.
 Cambridge University Press, 2010.

\bibitem{LU03}
C.~Lef{\`e}vre and S.~Utev.
 Exact norms of a {S}tein-type operator and associated stochastic
  orderings.
 \emph{Probab. Theory Related Fields}, 127 (3):
  353--366, 2003.

\bibitem{Lewis93}
T.~M. Lewis.
 A law of the iterated logarithm for random walk in random scenery
  with deterministic normalizers.
 \emph{J. Theoret. Probab.}, 6 (2): 209--230, 1993.

\bibitem{Mont01}
S.~J. Montgomery-Smith and A.~R. Pruss.
 A comparison inequality for sums of independent random variables.
 \emph{J. Math. Anal. Appl.}, 254 (1): 35--42, 2001.

\bibitem{Spitzer76}
F.~Spitzer.
 \emph{{Principles of Random Walk }}.
 Springer, 1976.

\bibitem{Cerny07}
J.~\v{C}ern{\`y}.
 {Moments and distribution of the local time of a two-dimensional
  random walk}.
 \emph{Stochastic Process. Appl.}, 117 (2):
  262--270, 2007.
\end{thebibliography}
\end{document}